% version as of 31.8.2018
\documentclass{amsart}
\usepackage[margin=3cm]{geometry}
\usepackage{amsmath,amsfonts,amssymb,amsthm}
\usepackage{graphics}
\usepackage{epsfig, esint}
\usepackage{graphics,color}
\setlength{\parindent}{0cm}
\usepackage{paralist}

\RequirePackage[colorlinks,citecolor=blue,urlcolor=blue]{hyperref}

%%%%%%%%%%%%%%%%Temporary%%%%%%%%%%%%%%%
%  \usepackage[notcite,notref,color]{showkeys}
   \definecolor{labelkey}{gray}{.8}
   \definecolor{refkey}{gray}{.8}
%   \usepackage{refcheck}
%%%%%%%%%%%%%%%%%%%%%%%%%%%%%%%%%%%%%%%%%%%fa

\providecommand{\dx}{\, \mathrm{d} x}

\providecommand{\dr}{\, \mathrm{d} r}

\providecommand{\R}{\mathbb{R}}

 % a-harm fcts in the exterior domain

\newcommand{\substep}[1]{\medskip\noindent\textit{Substep #1. }}

\newcommand{\ignore}[1]{}

\newtheorem{definition}{Definition}
\newtheorem{proposition}{Proposition}
\newtheorem{theorem}{Theorem}
\newtheorem{remark}{Remark}
\newtheorem{lemma}{Lemma}
\newtheorem{corollary}{Corollary}

%%%%% Begin Macros PBella

\providecommand{\PfStart}[1]{\newcounter{#1}\refstepcounter{#1}} % initiates counter=1
\providecommand{\PfStep}[2]{ \ifnum\value{#1}=1%\smallskip
\else\medskip\fi{\sc Step }\arabic{#1}\label{#2}\refstepcounter{#1}.} 
%%%%% End Macros PBella 

\author[P. Bella]{Peter Bella}
%\address{TU Dortmund\\ Fakult\"at f\"ur Mathematik\\ Lehrstuhl I\\ Vogelpothsweg 87\\44227 Dortmund, Germany.}
%\email{peter.bella@math.tu-dortmund.de}
\author[M. Sch\"affner]{Mathias Sch\"affner}
\address{Technische Universit\"at Dortmund, Fakult\"at f\"ur Mathematik\\
 Vogelpothsweg 87,44227 Dortmund, Germany.}
\email{peter.bella@udo.edu, mathias.schaeffner@tu-dortmund.de}

\title[Boundedness for $p$-Laplacian]{Local boundedness for $p$-Laplacian with degenerate coefficients} 

\begin{document}
\maketitle

%\centerline{To Giuseppe Mingione, on occasion of his 50th birthday}

\begin{abstract}
We study local boundedness for subsolutions of nonlinear nonuniformly elliptic equations whose prototype is given by $\nabla \cdot (\lambda |\nabla u|^{p-2}\nabla u)=0$, where the variable coefficient $0\leq\lambda$ and its inverse $\lambda^{-1}$ are allowed to be unbounded. Assuming certain integrability conditions on $\lambda$ and $\lambda^{-1}$ depending on $p$ and the dimension, we show local boundedness. Moreover, we provide counterexamples to regularity showing that the integrability conditions are optimal for every $p>1$.
\end{abstract}

\section{Introduction}

In this note, we study local boundedness of weak (sub)solutions of non-uniformly elliptic quasi-linear equations of the form
\begin{equation}\label{eq}
\nabla \cdot a(x,\nabla u)=0\qquad\mbox{in $\Omega$},
\end{equation}
where $\Omega\subset\R^d$ with $d\geq2$ and $a:\Omega\times\R^d\to\R^d$ is a Caratheodory function. The main example that we have in mind are $p$-Laplace type operators with variable coefficients, that is, there exist $p>1$ and $A:\Omega\to\R^{d\times d}$ such that $a(x,\xi)=A(x)|\xi|^{p-2}\xi$ for all $x\in\Omega$ and $\xi\in\R^d$. In order to measure the ellipticity of $a$, we introduce for fixed $p>1$
\begin{equation}\label{def1} 
 \lambda(x) := \inf_{\xi \in \R^d \setminus \{ 0 \}} \frac{a(x,\xi)\cdot \xi}{|\xi|^p} \qquad \mu(x) := \sup_{\xi \in \R^d \setminus \{ 0 \}} \frac{|a(x,\xi)|^p}{(a(x,\xi)\cdot \xi)^{p-1}}
\end{equation}
and suppose that $\lambda$ and $\mu$ are nonnegative. In the uniformly elliptic setting, that is that there exists $0<m\leq M<\infty$ such that $m\leq \lambda\leq\mu\leq M$ in $\Omega$, solution to \eqref{eq} are locally bounded, H\"older continuous and even satisfy Harnack inequality, see e.g.\ classical results of Ladyzhenskaya \& Ural'tseva, Serrin and Trudinger \cite{LU68,T67}. 

\smallskip

In this contribution, we are interested in a nonuniformly elliptic setting and assume that $\lambda^{-1}\in L^t(\Omega)$ and $\mu\in L^s(\Omega)$ for some integrability exponents $s$ and $t$. In \cite{BS19a}, we studied this in the case of linear nonuniformly elliptic equations, that is $a(x,\xi)=A(x)\xi$  corresponding to the case $p=2$, and showed local boundedness and Harnack inequality for weak solutions of \eqref{eq} provided it holds $\frac1s+\frac1t<\frac2{d-1}$. The results of \cite{BS19a} improved classical findings of Trudinger \cite{T71,T73} (see also \cite{MS68}) from the 1970s and are optimal in view of counterexamples constructed by Franchi et. al. in \cite{FSS98}. In this manuscript we extend these results to the more general situation of quasilinear elliptic equation with $p$-growth as described above. More precisely, we show

\begin{theorem}\label{T:1}
 Let $d \ge 2, p >1$, and let $\Omega \subset \R^d$. Moreover, let $s\in[1,\infty]$ and $t\in(1/(p-1),\infty]$ satisfy%\footnote{M: TODO: CHECK $p\in(1,2)$ CHECK $t\geq1$, at the moment the condition $t>1/(p-1)$ is nonsense}
 \begin{equation}\label{pqcond}
  \frac 1s + \frac 1t < \frac p{d-1}.% \quad \textrm{and} \quad t > \frac{1}{p-1}. 
 \end{equation}
 Let $a : \Omega \times \R^d \to \R^d$ be a Caratheodory function with $a(\cdot,0)\equiv0$ such that $\lambda$ and $\mu$ defined in~\eqref{def1} satisfy $\mu \in L^s(\Omega)$ and $\frac 1\lambda \in L^t(\Omega)$. Then any weak subsolution of \eqref{eq} is locally bounded from above in $\Omega$. 
 \end{theorem}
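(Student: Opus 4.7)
I would prove local boundedness by a Moser iteration for $v := (u-k)_+$, adapted to the degenerate/singular weights $\lambda$ and $\mu$. Fix $B_R \Subset \Omega$, $k \geq 0$, and for a parameter $\beta \geq 1$ test the weak subsolution inequality $\int a(x,\nabla u)\cdot\nabla \phi\,dx \leq 0$ with the nonnegative function $\phi := \eta^p v^\beta$, where $\eta$ is a smooth cutoff supported in $B_R$. Using the pointwise consequences of~\eqref{def1}, namely $a(x,\xi)\cdot\xi \geq \lambda(x)|\xi|^p$ and $|a(x,\xi)| \leq \mu(x)^{1/p}(a(x,\xi)\cdot\xi)^{(p-1)/p}$, together with Young's inequality to absorb the gradient term on the right-hand side, one arrives at a degenerate Caccioppoli-type estimate
\begin{equation*}
\int \eta^p \lambda |\nabla w|^p \, dx \,\lesssim\, (1+\beta)^{C_p} \int \mu\, w^p |\nabla \eta|^p \, dx, \qquad w := v^{(\beta+p-1)/p}.
\end{equation*}

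\textbf{Unweighting via H\"older.} I would then convert this into an unweighted gradient bound using $\lambda^{-1} \in L^t$ and $\mu \in L^s$. Setting $q := pt/(t+1) \in (1,p)$ (the hypothesis $t > 1/(p-1)$ is used here precisely to guarantee $q > 1$), H\"older's inequality gives
\begin{equation*}
\|\eta\, \nabla w\|_{L^q(B_R)} \,\leq\, \|\eta\, \lambda^{1/p} \nabla w\|_{L^p(B_R)}\,\|\lambda^{-1}\|_{L^t(B_R)}^{1/p},
\end{equation*}
and, on the right-hand side of the Caccioppoli, $\int \mu\, w^p |\nabla\eta|^p \leq \|\mu\|_{L^s(B_R)}\, \|w\,|\nabla \eta|\|_{L^{ps'}}^p$ with $s' = s/(s-1)$. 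Combining these with the Caccioppoli yields a one-step bound $\|\eta \nabla w\|_{L^q} \lesssim C(\lambda,\mu,\beta)\,\|w\,|\nabla \eta|\|_{L^{ps'}}$.

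\textbf{Sharp Sobolev step with effective dimension $d-1$.} To iterate, one needs a Sobolev-type embedding producing an $L$-exponent strictly larger than $ps'$ from an $L^q$ gradient bound. A direct use of the Euclidean embedding $W^{1,q}_0 \hookrightarrow L^{qd/(d-q)}$ yields a strict gain only under $\frac1s+\frac1t < \frac pd$, which is weaker than~\eqref{pqcond}. To reach the sharper threshold $\frac p{d-1}$, I would employ the spherical-slicing strategy developed in the linear case in~\cite{BS19a}: in polar coordinates around the center of the ball, apply the sharp Sobolev inequality on each sphere $\partial B_r$ (where the effective dimension is $d-1$), and combine with the fundamental theorem of calculus in the radial variable, exploiting the vanishing of $\eta w$ on $\partial B_R$. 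A direct computation shows that the resulting effective Sobolev exponent produces a strict gain $\kappa > 1$ over $ps'$ precisely when~\eqref{pqcond} is satisfied.

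\textbf{Iteration and main obstacle.} Given the one-step improvement, a standard Moser iteration along a geometric sequence of radii $R_j \downarrow R/2$ and exponents $\beta_j \to \infty$, with careful tracking of the $\beta$-dependence so that $\prod_j C(\beta_j)^{1/\beta_j}$ remains finite, yields $\|(u-k)_+\|_{L^\infty(B_{R/2})} \lesssim \|(u-k)_+\|_{L^{p_0}(B_R)}$ for some $p_0 = p_0(d,p,s,t) > 0$, which is the asserted local boundedness. I expect the principal technical difficulty to be the sharp Sobolev step producing the $(d-1)$-improvement, because the Caccioppoli derivation, the H\"older unweighting, and the Moser iteration itself are fairly robust once the correct functional-analytic framework is set up; the remaining care lies in checking that the constants' polynomial dependence on $\beta$ does not destroy the geometric iteration.
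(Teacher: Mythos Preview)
Your proposal is correct and follows essentially the same strategy as the paper: a Caccioppoli inequality for powers of $u_+$, the spherical-Sobolev mechanism of \cite{BS19a} to reach the sharp $(d{-}1)$-threshold, and Moser iteration. The one packaging difference is that the paper does not extract $\mu$ by a global H\"older and then seek a separate embedding via ``FTC in the radial variable''; instead it keeps $\mu$ inside and optimizes over radial cutoffs (Lemma~\ref{lm1}, extending \cite[Lemma~2.1]{HS19}), applying H\"older for $\mu$ and the spherical Sobolev inequality sphere-by-sphere before a final H\"older in the radial variable --- this ordering is what makes the exponents line up cleanly, so when you write out the $d{-}1$ step you will want to organize it that way.
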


\begin{remark}
Note that Theorem~\ref{T:1}, restricted to the case $p=2$ recovers the local boundedness part of \cite[Theorem~1.1]{BS19a}. \end{remark}

\begin{remark}
In \cite{CMM18}, Cupini, Marcellini and Mascolo studied local boundedness of local minimizer of nonuniformly elliptic variational integrals of the form $\int_\Omega f(x,\nabla v)\,dx$ where $f$ satisfies
\begin{equation}\label{growth:nonuniformpq}
\lambda(x)|\xi|^p\leq f(x,\xi)\leq \mu(x)+\mu(x)|\xi|^q\qquad\mbox{with $\lambda^{-1}\in L^t(\Omega)$ and $\mu\in L^s(\Omega)$}. 
\end{equation}
They proved local boundedness under the relation $\frac1{p t}+\frac1{q s}+\frac1p-\frac1q<\frac1d$ (see also \cite{BCM20} for related results). Considering the specific case $f(x,\xi)=\lambda(x)|\xi|^p$, the result of \cite{CMM18} implies local boundedness of solutions to $\nabla \cdot(\lambda(x)|\nabla u|^{p-2}\nabla u)=0$ provided $\lambda^{-1}\in L^t(\Omega)$ and $\lambda\in L^s(\Omega)$ with $\frac1{s}+\frac1{t}<\frac{p}d$, which is more restrictive compared to assumption \eqref{pqcond} in Theorem~\ref{T:1}. It would be interesting to investigate if the methods of the present paper can be combined with the ones of \cite{CMM18} to obtain local boundedness for minimizer of functionals satisfying \eqref{growth:nonuniformpq} assuming $\frac1{p t}+\frac1{q s}+\frac1p-\frac1q<\frac1{d-1}$. Note that in the specific case $s=t=\infty$, this follows from \cite{HS19}.
\end{remark}
The proof of Theorem~\ref{T:1} is presented in Section~\ref{sec:positive} and follows a variation of the well-known Moser-iteration method. The main new ingredient compared to earlier works \cite{T71,CMM18} lies in an optimized choice of certain cut-off functions -- an idea that we first used in \cite{BS19a} for linear nonuniformly elliptic equations (see also \cite{AD,BS4,zhang} for recent applications to linear parabolic equations).  

As mentioned above, an example constructed in \cite{FSS98} shows that condition \eqref{pqcond} is optimal for the conclusion of Theorem~\ref{T:1} in the case $p=2$. In the second main result of this paper, we show -- building on the construction of \cite{FSS98} --  that condition \eqref{pqcond} is optimal for the conclusion of Theorem~\ref{T:1} for all $p\in(1,\infty)$. More precisely, we have
\begin{theorem}\label{T:negative}
Let $d\geq3$ and $1+\frac{1}{d-2}<p<\infty$ and let $s\geq1$ and $t>\frac{1}{p-1}$ be such that $\frac1s+\frac1t\geq \frac{p}{d-1}$ and $\frac{1}{1+1/t}p<d-1$. Then there exists $\lambda:B(0,1)\to(0,\infty)$ satisfying $\lambda\in L^s(B_1)$ and $\lambda^{-1}\in L^t(B_1)$ and an unbounded weak subsolution of
\begin{equation}\label{eq:negative}
-\nabla \cdot (\lambda |\nabla v|^{p-2}\nabla v)=0%\qquad\mbox{}.
\end{equation}  
in $B(0,1)$. Moreover, the same conclusion is valid for $d\geq3$, $1<p\leq 1+\frac{1}{d-2}$ and $s\geq1$ and $t>\frac{1}{p-1}$ satisfying the strict inequalities $\frac1s+\frac1t> \frac{p}{d-1}$ and $\frac{t}{t+1}p<d-1$.
\end{theorem}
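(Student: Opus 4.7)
The plan is to generalize the counterexample construction of Franchi--Serapioni--Serra-Cassano \cite{FSS98} from the linear case $p=2$ to general $p$. Because the critical exponent in \eqref{pqcond} is $p/(d-1)$, with $d-1$ in the denominator, the natural setup is cylindrical: writing $x = (x', x_d) \in \R^{d-1}\times\R$ and $r = |x'|$, I would seek $\lambda$ and $v$ depending principally on $r$, with an auxiliary modulation by $x_d$. A clean two-parameter ansatz is
\[
\lambda(x) = |x'|^{-\beta_1}|x_d|^{\beta_2}, \qquad v(x) = f(|x'|),
\]
with $\beta_1,\beta_2 \ge 0$ and a profile $f$ to be chosen so that $v$ is unbounded as $r \to 0$; the second factor $|x_d|^{\beta_2}$ in $\lambda$ is what makes the integrability of $\lambda^{-1}$ nontrivial.

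Three sets of constraints need to be balanced. The integrability conditions $\lambda \in L^s(B_1)$ and $\lambda^{-1} \in L^t(B_1)$ become, in cylindrical coordinates, $\beta_1 s < d-1$ and $\beta_2 t < 1$; pushing these to the boundary gives $\beta_1 \approx (d-1)/s$ and $\beta_2 \approx 1/t$. Next, $v \in W^{1,p}_\lambda(B_1)$ requires the finiteness of $\int_0^1 r^{d-2-\beta_1}|f'(r)|^p \, dr$. Finally, using the cylindrical symmetry the subsolution inequality
\[
\int \lambda |\nabla v|^{p-2}\nabla v \cdot \nabla \phi \, dx \le 0 \qquad \text{for all } \phi \in C^\infty_c(B_1),\ \phi\ge 0,
\]
reduces to a monotonicity condition on $r^{d-2-\beta_1}|f'|^{p-2}f'$ in $r$, plus a check of the sign of the (explicit) singular contribution supported on the axis $\{x'=0\}$. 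The hypotheses $\frac{1}{s}+\frac{1}{t} \ge \frac{p}{d-1}$ and $\frac{pt}{t+1} < d-1$ are exactly what is needed to make these constraints simultaneously satisfiable.

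The main difficulty will be arranging the subsolution inequality with the right sign while keeping $v$ unbounded: the naive unbounded ansatz $f(r) = r^{-\alpha}$ has the correct local scaling, but by the usual maximum principle heuristic one expects the contribution on the axis to have the wrong sign, giving a supersolution rather than a subsolution. To get a genuine unbounded subsolution, following \cite{FSS98} I would use a profile $f$ with a more refined singularity --- in the critical regime $\frac{1}{s}+\frac{1}{t} = \frac{p}{d-1}$ a logarithmically modified profile, where the integrability of the weighted $p$-energy at the boundary is what forces the extra assumption $p > (d-1)/(d-2)$; in the strictly subcritical regime the available slack lets one use a pure power profile with slightly off-critical exponent. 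Having fixed the profile, the rest of the argument is a direct one-dimensional calculation verifying $\lambda \in L^s$, $\lambda^{-1} \in L^t$, finiteness of $\int \lambda|\nabla v|^p\,dx$, and the distributional subsolution inequality, together with the explicit unboundedness of $f$ at the origin.
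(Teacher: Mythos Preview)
Your ansatz $v=f(|x'|)$ with $f$ unbounded at the origin cannot produce a subsolution, no matter how you refine the profile. Since $\partial_{x_d}v\equiv 0$, the factor $|x_d|^{\beta_2}$ in $\lambda$ simply multiplies through the divergence and plays no structural role; the equation reduces to the one-dimensional condition that $g(r):=r^{d-2-\beta_1}|f'|^{p-2}f'$ be nondecreasing. But $f$ unbounded from above at $r=0$ forces $f'\le 0$ near the origin, hence $g\le 0$ there. If $g$ is nondecreasing then $g(r)\ge \lim_{s\downarrow 0}g(s)=:L$; the boundary contribution on $\{|x'|=\epsilon\}$ that you flag is exactly $-L$ times a positive quantity, so either $L<0$ and the distributional inequality fails, or $L=0$ and $g\equiv 0$, i.e.\ $f'\equiv 0$, contradicting unboundedness. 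This is not a matter of choosing a logarithmic rather than power profile: the obstruction is structural and rules out any purely radial $v$.

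The missing idea is to let $v$, not $\lambda$, carry the dependence on the remaining variable. The paper takes $\lambda_\theta(x)=\omega_\theta(|x'|)$ depending on $|x'|$ alone, and $v(x)=e^{\alpha x_1}\phi(|x'|)$. The exponential factor produces, in the expansion of $\nabla\cdot(|\nabla v|^{p-2}\nabla v)$, a positive term of the form $\alpha^2(p-1)\phi$ (times a nonnegative weight) that can dominate the negative contributions coming from $\phi'$ and $\phi''$, which is precisely what your radial ansatz lacks. Even with this device, a single power profile does not suffice: the paper uses a piecewise construction on dyadic shells $\{4^{-i-1}\le |x'|<4^{-i}\}$, alternating between a power piece $r^{-Q}$ and a quadratic piece, with matching parameters $\eta_i$ chosen so that the normal flux $\omega_\theta\,|\nabla v|^{p-2}\partial_\nu v$ is continuous (or has the correct sign jump) across the interfaces. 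The integrability of $\lambda_\theta$ and $\lambda_\theta^{-1}$ then comes from the oscillation of $\omega_\theta$ between the two shell types, and the role of the threshold $p>1+\tfrac{1}{d-2}$ is to make a logarithmic integral $\int_0^1 r^{-1}(\log(2/r))^{-(1-1/p)(d-1)}\,dr$ converge in the critical case.
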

In particular, we see that condition \eqref{pqcond} is sharp on the scale of Lebesgue-integrability for the conclusion of Theorem~\ref{T:1}. We note that in the particularly interesting case $p=2$ and $d=3$ the construction in Theorem~\ref{T:negative} fails in the critical case $\frac1s+\frac1t=\frac{p}{d-1}$, see \cite{AD} for counterexamples to local boundedness for related problems in $d=3$.

Let us now briefly discuss a similar but different instance of non-uniform ellipticity  which is one of the many areas within the Calculus of Variations, where G.\ Mingione made significant contributions. Consider variational integrals
\begin{equation}\label{eq:int}
\int_\Omega F(x,\nabla u)\,dx,
\end{equation}
where the integrand $F$ satisfies $(p,q)$ growth conditions of the form 
\begin{equation}\label{growth:pq}
|\xi|^p\lesssim F(x,\xi)\lesssim 1+|\xi|^q\qquad1<p\leq q<\infty,
\end{equation} 
which where first systematically studied by Marcellini in \cite{Mar89,Mar91}; see also the recent reviews \cite{Min06,MR21}. The focal point in the regularity theory for those functionals is to obtain Lipschitz-bounds on the minimizer. Indeed, once boundedness of $|\nabla u|$ is proven the unbalanced growth in \eqref{growth:pq} becomes irrelevant and there is a huge literature dedicated to Lipschitz estimates under various assumptions on $F$, see e.g.\ the interior estimates \cite{BM20,BS19c,BS22,ELM02} in the autonomous case, \cite{BCM18,CM15,CMMP21,DM19,DM21,DM22,EMM19,HO21} in the non-autonomous case, \cite{BDMS,DP22} for Lipschitz-bounds at the boundary, and also examples where the regularity of minimizer fail \cite{BDS20,G87,Mar91,ELM04,FMM04}. Finally, we explain a link between functionals with $(p,q)$-growth and (linear) equations with unbounded coefficients. Consider the autonomous case that $F(x,\xi)=F(\xi)$ and let $u\in W^{1,p}(\Omega)$ be a local minimizer of \eqref{eq:int}. Linearizing the corresponding Euler-Largrange equation yield (formally)
$$
\nabla \cdot D^2F(\nabla u)\nabla \partial_i u=0.
$$  
Assuming $(p,q)$-growth with $p=2$ of the form $|\zeta|^2\lesssim D^2F(\xi)\zeta\cdot\zeta\lesssim (1+|\xi|)^{q-2}|\zeta|^2$ implies that $|D^2F(\nabla u)|\in L_{\rm loc}^{\frac{2}{q-2}}(\Omega)$. Hence condition \eqref{pqcond} with $p=2$ yield local boundedness of $\partial_i u$ if $\frac{q-2}{2}<\frac{2}{d-1}$, which is the currently best known general bound ensuring Lipschitz-continuity of local minimizer of \eqref{eq:int} - this reasoning was made rigorous in \cite{BS19c} for $p\geq2$ (see also \cite{BS22} for the case $p\in(1,\infty)$).

\section{Local boundedness, proof of Theorem~\ref{T:1}}\label{sec:positive}

Before we prove Theorem~\ref{T:1}, we introduce the notion of solution that we consider here. 

\begin{definition}\label{def:solution}
Fix a domain $\Omega\subset\R^d$ and a Caratheodory function $a:\Omega\times\R^d\to\R^{d}$ such that for a fixed $p\in(1,\infty)$ the functions $\lambda,\mu\geq0$ given in \eqref{def1} satisfy $\frac1\lambda\in L^{\frac{1}{p-1}}(\Omega)$ and $\mu\in L^1(\Omega)$. The spaces $H_0^{1,p}(\Omega,a)$ and $H^{1,p}(\Omega,a)$ are respectively defined as the completion of $C_c^{1}(\Omega)$ and $C^{1}(\Omega)$ with respect to the norm $\|\cdot\|_{H^{1,p}(\Omega,a)}$, where
\begin{equation*}
\|u\|_{H^{1,p}(\Omega,a)}:=\biggl(\int_\Omega \lambda |\nabla u|^p+\mu |u|^p\,dx\biggr)^\frac1p.
\end{equation*}
We call $u$ a weak solution (subsolution, supersolution) of \eqref{eq} in $\Omega$ if and only if $u\in H^{1,p}(\Omega,a)$ and
\begin{equation}\label{def:harmonic}
 \forall \phi\in H_0^{1,p}(\Omega,a),\, \phi\geq0:\qquad \mathcal A(u,\phi)=0\quad (\leq0,\geq0),\quad\mbox{where}\quad\mathcal A(u,\phi):=\int_\Omega a(x,\nabla u) \cdot \nabla \phi\,dx.
\end{equation}
Moreover, we call $u$ a local weak solution of \eqref{eq} in $\Omega$ if and only if $u$ is a weak solution of \eqref{eq} in $\Omega'$ for every bounded open set $\Omega'\Subset\Omega$. Throughout the paper, we call a solution (subsolution, supersolution) of \eqref{eq} in $\Omega$ $a$-harmonic ($a$-subharmonic, $a$-superharmonic) in $\Omega$. 
\end{definition}
The above definitions generalize the concepts of weak solutions and the spaces $H^1(\Omega,a)$ and $H^1_0(\Omega,a)$ discussed by Trudinger \cite{T71,T73} in the linear case, that is $a(x,\xi)=A(x)\xi$. We stress that the condition $\lambda^{-1}\in L^\frac1{p-1}(\Omega)$ and H\"older inequality imply
$$
\|\nabla u\|_{L^1(\Omega)}\leq \|\lambda^{-1}\|_{L^\frac1{p-1}(\Omega)}\biggl(\int_{\Omega}\lambda |\nabla u|^p\biggr)^\frac1p\leq\|\lambda^{-1}\|_{L^\frac1{p-1}(\Omega)}\|u\|_{H^{1,p}(\Omega,a)}
$$
and thus, we have that $W^{1,1}(\Omega)\subset H^{1,p}(\Omega,a)$, where we use that by the same computation as above it holds $\|u\|_{L^1(\Omega)}\leq\|\mu^{-1}\|_{L^\frac1{p-1}(\Omega)}\|u\|_{H^{1,p}(\Omega,a)}$ and that by definition we have $\lambda\leq\mu$. From this, we also deduce that the elements of $H^{1,p}(\Omega,a)$ are strongly differentiable in the sense of \cite{GT}. In particular this implies that there holds a chain rule in the following sense
\begin{remark}
 Let $g:\R\to\R$ be uniformly Lipschitz-continuous with $g(0)=0$ and consider the composition $F:=g(u)$. Then, $u\in H_0^{1,p}(\Omega,a)$ (or $\in H^{1,p}(\Omega,a)$) implies $F \in H_0^{1,p}(\Omega,a)$ (or $\in H^{1,p}(\Omega,a)$), and it holds $\nabla F=g'(u)\nabla u$ a.e.\ (see e.g.\ \cite[Lemma 1.3]{T73}). In particular,  if $u$ satisfies $u\in H^{1,p}(\Omega,a)$ (or $\in H^{1,p}(\Omega,a)$) then also the truncations
\begin{equation*}%\label{def:u+-}
 u_+:=\max\{u,0\};\quad u_{-}:=-\min\{u,0\}
\end{equation*}
satisfy $u_+,u_-\in H^{1,p}(\Omega,a)$ (or $\in H^{1,p}(\Omega,a)$).
\end{remark}
%
%Finally we remark that the above solution concept corresponds to $H$ solutions in the terminology of Zhikov, see e.g.\ \cite{}. 

Now we come to the local boundedness from above for weak subsolutions of \eqref{eq}.

\begin{theorem}\label{T:est}
 Let $d \ge 3$, $\Omega\subset \R^d$ and $p\in(1,\infty)$. Moreover, let $s\in[1,\infty]$ and $t\in(\frac1{p-1},\infty]$ satisfy \eqref{pqcond}. Let $a : \Omega \times \R^d \to \R^d$ be a Caratheodory function with $a(\cdot,0)\equiv0$ such that $\lambda$ and $\mu$ defined in~\eqref{def1} satisfy $\mu \in L^s(\Omega)$ and $\frac 1\lambda \in L^t(\Omega)$. Then, there exists $c=c(d,p,s,t)\in[1,\infty)$ such that for any weak subsolution $u$ of \eqref{eq} and for any ball $B_{R}\subset \Omega$ it holds% Quantitatively, for any $\gamma > 0$ there exists $c=c(\gamma,d,p,s,t) \in [1,\infty)$ such that for any ball $B_{R} \subset \Omega$ 
 \begin{equation*}%\label{claim:T:est}
  \sup_{B_{R/2}} u %^\|u\|_{L^\infty(B_{1/2})} 
  \le 
 c \Lambda(B_R)^{\frac1p\frac1\delta}    \|u_+\|_{\underline W^{1,\frac{1}{1+1/t}p}(B_R)}   
 \end{equation*}
 where $\Lambda(S) := \bigl( \fint_S \mu^{s} \bigr)^{1/s} \bigl( \fint_S \lambda^{-t} \bigr)^{1/t}$, $\|v\|_{\underline W^{1,\gamma}(B_r)}:=r^{-\frac{d}\gamma}\|v\|_{L^\gamma(B_r)}+r^{1-\frac{d}\gamma}\|\nabla v\|_{L^\gamma(B_r)}$ for all $\gamma\geq1$ and $r>0$; and $\delta := \frac1{s_*}-(\frac1p-\frac1{pt})>0$ (see Lemma~\ref{lm1} for the definition of $s_*$). Moreover, in the case $1+\frac1t<\frac{p}{d-1}$, there exists $c=c(d,p,t)\in[1,\infty)$ such that
  \begin{equation*}
  \sup_{B_{R/2}} u \le c\|u_+\|_{\underline W^{1,\frac{1}{1+1/t}p}(B_R)}.   
 \end{equation*}
\end{theorem}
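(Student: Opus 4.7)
The plan is a weighted Moser iteration whose main new ingredient is an optimized choice of radial cut-off, borrowed from \cite{BS19a}, which upgrades the dimension $d$ of a naive iteration to $d-1$ and thereby matches \eqref{pqcond}. Fix a subsolution $u$, a ball $B_R\subset\Omega$, and $\beta>0$. First I would test the weak subsolution inequality with $\phi=\eta^p u_+^{\beta}$ for a nonnegative cut-off $\eta\in C^{0,1}_c(B_R)$ (admissibility is obtained by truncating $u_+$ at level $M$ and letting $M\to\infty$). Combining the ellipticity $a(x,\xi)\cdot\xi\ge\lambda|\xi|^p$, the growth bound $|a(x,\xi)|\le\mu(x)^{1/p}(a(x,\xi)\cdot\xi)^{(p-1)/p}$, and Young's inequality to absorb the mixed term yields the Caccioppoli-type estimate
\begin{equation*}
\int \eta^p\,\lambda\,|\nabla w|^p\,dx
\le C(\beta,p)\int \mu\,|\nabla\eta|^p\,w^p\,dx,
\qquad w:=u_+^{(\beta+p-1)/p}.
\end{equation*}

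Setting $\gamma:=p/(1+1/t)=pt/(t+1)$ (which exceeds $1$ precisely because $t>1/(p-1)$), H\"older's inequality with the weight $\lambda^{-1}\in L^t$ converts the weighted LHS above into an unweighted $L^\gamma$-norm:
\begin{equation*}
\|\nabla(\eta w)\|_{L^\gamma(B_R)}^p
\le C(\beta,p)\,\|\lambda^{-1}\|_{L^t(B_R)}\int(\mu+\lambda)\,|\nabla\eta|^p\,w^p\,dx.
\end{equation*}
The decisive step is now the choice of $\eta$: a linear cut-off $|\nabla\eta|\lesssim(R-\rho)^{-1}$ combined with the classical Sobolev embedding $W_0^{1,\gamma}\hookrightarrow L^{d\gamma/(d-\gamma)}$ would yield only the weaker condition $\frac1s+\frac1t<\frac p d$. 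To reach $p/(d-1)$ I would instead take a radial $\eta(x)=\psi(|x|)$ with $\psi(\rho)=1$, $\psi(R)=0$, use Fubini to write
\begin{equation*}
\int(\mu+\lambda)|\nabla\eta|^p w^p\,dx
=\int_\rho^R|\psi'(r)|^p\int_{\partial B_r}(\mu+\lambda)w^p\,d\mathcal H^{d-1}\,dr,
\end{equation*}
invoke on each sphere the $(d-1)$-dimensional Sobolev-type inequality provided by Lemma~\ref{lm1} (with effective exponent $s_*$), and then optimize $\psi$ by a one-dimensional extremal-length argument. After one more H\"older bound using $\mu\in L^s$ this leads to a reverse-H\"older-type inequality of the form
\begin{equation*}
\Bigl(\av_{B_\rho}w^{q}\Bigr)^{1/q}
\le\frac{C(\beta,p)}{(R-\rho)^{\sigma}}\,\Lambda(B_R)^{1/p}
\Bigl(\av_{B_R}w^{\kappa}\Bigr)^{1/\kappa},
\end{equation*}
for suitable $q>\kappa>0$, where the strict inequality $q>\kappa$ is equivalent to $\delta>0$, which in turn is equivalent by direct algebra to \eqref{pqcond}.

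I would then close the argument by iterating on geometric sequences $R_n:=R(1/2+2^{-n-1})$ and $\beta_n$ chosen so that the exponent on the LHS at step $n$ matches that on the RHS at step $n+1$; the ratio $q/\kappa>1$ drives the geometric growth. Standard Moser bookkeeping (the constants $C(\beta_n,p)$ grow polynomially while $(R_n-R_{n+1})^{-\sigma}\sim 2^{n\sigma}$) yields after summation
\begin{equation*}
\sup_{B_{R/2}}u_+\le C\,\Lambda(B_R)^{1/(p\delta)}\|u_+\|_{\underline W^{1,\gamma}(B_R)},
\end{equation*}
with the initial step matched to the $\underline W^{1,\gamma}$-norm via a standard interpolation. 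For the second statement, when $\gamma<d-1$, the $(d-1)$-dimensional Sobolev inequality already provides enough gain for the iteration to close without exploiting $\mu$; consequently the $\mu$-weight can be dropped from the Caccioppoli right-hand side (after a Young absorption using $\lambda\le\mu$) and the prefactor $\Lambda(B_R)^{1/(p\delta)}$ becomes a purely dimensional constant. The main technical obstacle is the optimized cut-off step: constructing the 1D optimizer $\psi$, applying Lemma~\ref{lm1} with the correct spherical exponent $s_*$, and tracking the resulting algebra carefully enough to verify that $\delta>0$ is in fact \emph{equivalent} to \eqref{pqcond} -- this equivalence is exactly what makes the result sharp for every $p>1$.
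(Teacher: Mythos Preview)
Your treatment of the first estimate is essentially the paper's argument: Caccioppoli inequality via testing with $\eta^p u_+^\beta$, H\"older with $\lambda^{-1}\in L^t$ to pass to $L^{\gamma}$, the radial optimization of Lemma~\ref{lm1} to bring in the $(d-1)$-dimensional Sobolev exponent $s_*$, and a standard Moser iteration on dyadic shells. The bookkeeping you sketch is close enough to Steps~1--4 of the paper that I would call this the same proof.

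The gap is in your handling of the second statement. You write ``when $\gamma<d-1$'', but the hypothesis $1+\tfrac1t<\tfrac{p}{d-1}$ is equivalent to $\gamma=\tfrac{pt}{t+1}>d-1$, the opposite inequality. With $\gamma>d-1$ the $(d-1)$-dimensional Sobolev inequality on spheres gives an $L^\infty$ bound directly, so there is no iteration to run at all. The paper exploits this as follows (Step~5): by Fubini, pick a radius $r_0\in(\tfrac R2,R)$ on which $\|u_+\|_{W^{1,\gamma}(S_{r_0})}$ is controlled by the bulk norm; then test the subsolution inequality with $\phi=(u_+-\sup_{S_{r_0}}u_+)_+$, which vanishes on $S_{r_0}$ and hence extends by zero. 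This forces $\nabla\phi=0$ a.e.\ in $B_{r_0}$, so $u_+\le\sup_{S_{r_0}}u_+$ there, and the latter is bounded by Morrey's embedding on the sphere since $\gamma>d-1$. Your proposed route---still iterating but ``dropping the $\mu$-weight after a Young absorption using $\lambda\le\mu$''---does not make sense as stated (the inequality $\lambda\le\mu$ goes the wrong way to remove $\mu$ from the right-hand side), and more importantly misses that this regime is handled by a one-shot maximum-principle argument rather than any improvement-of-integrability mechanism. Note also that this case is what covers $s=1$, where $\mu$ has no higher integrability to exploit, so an argument that avoids $\mu$ entirely is not optional but required.
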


In the two-dimensional case, we have the following
\begin{proposition}\label{P:2d}
Let $\Omega\subset \R^2$ and $p\in(1,\infty)$. Let $a : \Omega \times \R^d \to \R^d$ be a Caratheodory function with $a(\cdot,0)\equiv0$ such that $\lambda$ and $\mu$ defined in~\eqref{def1} satisfy $\mu \in L^1(\Omega)$ and $\frac 1\lambda \in L^\frac1{p-1}(\Omega)$. Then, there exists $c=c(d,p)\in[1,\infty)$ such that for any weak subsolution $u$ of \eqref{eq} and for any ball $B_{R}\subset \Omega$ it holds
  \begin{equation*}
  \sup_{B_{R/2}} u \le c\|u_+\|_{\underline W^{1,1}(B_R)}.   
 \end{equation*}
\end{proposition}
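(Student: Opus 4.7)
The proposition is the two-dimensional, critical-integrability endpoint of Theorem~\ref{T:est}: in $d=2$ with $t=\frac1{p-1}$ one has $\frac{p}{1+1/t}=1$, so the natural exponent for the unweighted gradient is $q=1$, and the needed gain is delivered by the critical 2D Sobolev embedding $W^{1,1}(\R^2)\hookrightarrow L^2(\R^2)$. My plan is therefore to adapt the Moser-iteration scheme used for Theorem~\ref{T:est} to this borderline regime.

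First I would test the subsolution inequality with $\phi=\eta^p w^{\alpha p-(p-1)}$, where $w=u_++\delta$ for some $\delta>0$, $\alpha\geq1$, and $\eta$ is a nonnegative cutoff supported in $B_R$ with $\eta\equiv1$ on $B_r$. Setting $v:=w^\alpha$ and using Young's inequality on the cross term (via the bound $|a(x,\xi)|^p\leq\mu(x)(a(x,\xi)\cdot\xi)^{p-1}$) produces the Caccioppoli inequality
\[
\int_{B_R}\lambda\,\eta^p|\nabla v|^p\leq C(\alpha,p)\int_{B_R}\mu\,|\nabla\eta|^p v^p.
\]
H\"older's inequality with $\lambda^{-1}\in L^{1/(p-1)}$ then converts this into an $L^1$ gradient estimate,
\[
\int_{B_R}\eta|\nabla v|\leq\Bigl(\int_{B_R}\lambda\,\eta^p|\nabla v|^p\Bigr)^{1/p}\Bigl(\int_{B_R}\lambda^{-1/(p-1)}\Bigr)^{(p-1)/p},
\]
and plugging the result into the 2D Sobolev inequality $\|\eta v\|_{L^2(B_R)}\leq C\|\nabla(\eta v)\|_{L^1(B_R)}$ yields an $L^{2\alpha}$-type bound on $w$ inside $B_r$ in terms of integral quantities on $B_R$.

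The final step is the iteration: apply this one-step estimate on a geometric sequence of exponents $\alpha_k$ along nested balls $r_k\downarrow R/2$, track the $\alpha_k$-dependence of constants, and pass to the limit. Because the Sobolev gain $2$ in $d=2$ matches the doubling of the exponent, the per-step constants grow only polynomially in $\alpha_k$, their $\alpha_k$-th roots multiply to something finite, and in the limit the supremum estimate $\sup_{B_{R/2}}u\leq c\|u_+\|_{\underline W^{1,1}(B_R)}$ emerges. Finally, letting $\delta\downarrow0$ removes the regularization.

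The hard part will be the right-hand side $\int\mu|\nabla\eta|^p v^p$ of the Caccioppoli bound. With a linear cutoff $|\nabla\eta|\leq C/(R-r)$ this becomes $(R-r)^{-p}\int\mu v^p$, which one cannot control by an $L^q$-norm of $v$ from $\mu\in L^1$ alone (H\"older would force $v\in L^\infty$, precisely what we are trying to prove). This is where the optimized choice of cutoff introduced in \cite{BS19a} is essential: one picks $\eta$ as a weighted $p$-capacitary function for $B_r$ inside $B_R$, so that $\int\mu|\nabla\eta|^p$ is replaced by a sharper capacity that vanishes as $r\uparrow R$ by absolute continuity of $\int\mu$. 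Propagating this improvement through every iteration step and keeping careful control of the constants is the technical heart of the argument.
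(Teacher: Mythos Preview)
Your Moser-iteration plan runs into a genuine obstacle: at the endpoint $d=2$, $s=1$, $t=\frac{1}{p-1}$ the iteration does not gain. After the Caccioppoli inequality and H\"older with $\lambda^{-1}\in L^{1/(p-1)}$ you control $\|\nabla(u^\alpha)\|_{L^1}$ by the right-hand side $\int\mu|\nabla\eta|^p u^{\alpha p}$. Even with the optimized cutoff (which in $d=2$ would exploit the embedding $W^{1,1}(S^1)\hookrightarrow L^\infty(S^1)$ on circles), the best available bound is $\|\nabla(u^\alpha)\|_{L^1(B_\rho)}\lesssim\|u^\alpha\|_{W^{1,1}(B_\sigma)}$. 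The one-step improvement in the paper's scheme relies on the strict inequality $s_*<\frac{pt}{t+1}$, equivalently $\delta=\frac{1}{s_*}-\frac{1}{p}(1+\frac{1}{t})>0$; here $\frac{pt}{t+1}=1=s_*$, so $\delta=0$ and $\chi=1$, i.e.\ you map $W^{1,1}$ of $u^\alpha$ back to $W^{1,1}$ of $u^\alpha$ with no increase in exponent. The 2D Sobolev embedding $W^{1,1}\hookrightarrow L^2$ does produce $\|u^\alpha\|_{L^2}$, but feeding this into the next step requires $\|\nabla(u^{2\alpha})\|_{L^1}=2\|u^\alpha\nabla(u^\alpha)\|_{L^1}$, which by H\"older needs $\nabla(u^\alpha)\in L^2$, not just $L^1$---so the loop does not close. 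Your final paragraph correctly senses the difficulty with $\mu\in L^1$ but misdiagnoses it: the capacitary cutoff cannot rescue an iteration whose improvement factor is exactly $1$.

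The paper bypasses iteration entirely with a short maximum-principle argument (it is literally Step~5 of the proof of Theorem~\ref{T:est}). By Fubini, pick $r_0\in(R/2,R)$ with $\|u_+\|_{W^{1,1}(S_{r_0})}\le c\|u_+\|_{W^{1,1}(B_R)}$. Since $S_{r_0}$ is one-dimensional, $W^{1,1}(S_{r_0})\hookrightarrow C(S_{r_0})$, so $M:=\sup_{S_{r_0}}u_+$ is finite and controlled by $\|u_+\|_{\underline W^{1,1}(B_R)}$. Testing the subsolution with $\phi=(u_+-M)_+$, which vanishes on $S_{r_0}$ and extends by zero to $\Omega$, gives $\int_{B_{r_0}}\lambda|\nabla\phi|^p\le 0$, hence $\phi\equiv 0$ and $\sup_{B_{R/2}}u\le M$.
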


Before we proof Theorem~\ref{T:est} and Proposition~\ref{P:2d}, we show that they imply the claim of Theorem~\ref{T:1}

\begin{proof}[Proof of Theorem~\ref{T:1}]
In view of Theorem~\ref{T:est} and Proposition~\ref{P:2d} it remains to show that for any weak subsolution $u$ of \eqref{eq} and for any ball $B_{R}\subset \Omega$ it holds $\|u_+\|_{\underline W^{1,\frac{t}{t+1}p}(B_R)}<\infty$. This is a consequence of H\"older inequality and the concept of weak subsolution, see Definition~\ref{def:solution}. Indeed, we have
$$
\biggl(\int_{B_R}(|u|+|\nabla u|)^\frac{tp}{t+1}\biggr)^\frac{t+1}t\leq \biggl(\int_{B_R}\lambda^{-t}\biggr)^\frac1t\int_{B_R}\lambda(|u|+|\nabla u|)^p<\infty,
$$ 
where the right-hand side is finite since $u\in H^{1,p}(\Omega,a)$ (note that $\lambda\leq \mu$ by definition).
\end{proof}

For the proof of Theorem~\ref{T:est}, we need a final bit of preparation, namely the following optimization lemma
\begin{lemma}[Radial optimization]\label{lm1}
Let $d \ge 3$, $p > 1$, $s > 1$, and let $s_* := \max\{1,\big( \frac 1p \bigl( 1 - \frac 1s\bigr) + \frac 1{d-1} \big)^{-1}\}$.
%if $s > 1$ or $d=2$ and $s_* := d-1+\epsilon$ with small $\epsilon > 0$ if $s=1$ and $d \ge 3$. 
For $\frac 12 \le \rho < \sigma \le 2$, let $v \in W^{1,s_*}(B_\sigma)$ %with $\frac 1{s_*} = \min\{ \frac 1p \bigl( 1 - \frac 1s\bigr) + \frac 1{d-1}, 1 \}$ 
and $\mu \in L^s(B_\sigma)$, $\mu \ge 0$, be such that $\mu |v|^p \in L^1(B_\sigma)$. Then there exists $c=c(d,p,s)$ such that 
\begin{equation}\nonumber
 J(\rho,\sigma,v) := \inf \biggl\{ \int_{B_\sigma} \mu |v|^p |\nabla \eta|^p \dx : \eta \in C^1_0(B_\sigma), \eta \ge 0, \eta = 1 \textrm{ in } B_\rho \biggr\}
\end{equation}
satisfies
\begin{equation}\nonumber
 J(\rho,\sigma,v) \le c(\sigma-\rho)^{%d(1-\frac 1s - \frac{p}{s_*})}%
 -\frac{pd}{d-1}} 
 \|\mu\|_{L^s(B_\sigma \setminus B_\rho)} \bigl( \|\nabla v\|^p_{L^{s_*}(B_\sigma \setminus B_\rho)} + \rho^{-p} \|v\|^p_{L^{s_*}(B_\sigma \setminus B_\rho)} \bigr). 
\end{equation}
\end{lemma}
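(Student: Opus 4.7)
The plan is to restrict attention to radial cut-offs $\eta(x)=\phi(|x|)$ with $\phi\equiv 1$ on $[0,\rho]$ and $\phi\equiv 0$ on $[\sigma,\infty)$, solve the resulting one-dimensional minimization problem explicitly, and then convert the radial integral back to $L^s$/$L^{s_*}$ norms via H\"older on $\partial B_r$ and a carefully chosen three-function H\"older in the radial direction. Polar coordinates turn the objective into
$$\int_{B_\sigma}\mu|v|^p|\nabla\eta|^p\dx=\int_\rho^\sigma |\phi'(r)|^p\,G(r)\,dr,\qquad G(r):=\int_{\partial B_r}\mu|v|^p\,d\mathcal{H}^{d-1},$$
and minimization under the boundary constraint $\phi(\rho)=1$, $\phi(\sigma)=0$ is classical: the Euler--Lagrange equation forces $|\phi'|^{p-2}\phi' G\equiv\text{const}$, so the minimum equals $\bigl(\int_\rho^\sigma G^{-1/(p-1)}\bigr)^{-(p-1)}$. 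A small regularization $G\mapsto G+\varepsilon$, truncation, and mollification produce admissible $C^1_0(B_\sigma)$ test functions; letting $\varepsilon\downarrow 0$ yields
$$J(\rho,\sigma,v)\leq \Bigl(\int_\rho^\sigma G(r)^{-1/(p-1)}\,dr\Bigr)^{-(p-1)}.$$

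Next, I bound $G(r)$ sphere by sphere. H\"older with $(s,s')$ gives $G(r)\leq \|\mu\|_{L^s(\partial B_r)}\|v\|^p_{L^{ps'}(\partial B_r)}$, and the definition of $s_*$ is tailored precisely for the $(d{-}1)$-dimensional Sobolev embedding $W^{1,s_*}(\partial B_r)\hookrightarrow L^{ps'}(\partial B_r)$, since $\tfrac{1}{s_*}-\tfrac{1}{d-1}=\tfrac{1-1/s}{p}=\tfrac{1}{ps'}$. Rescaling $\partial B_r$ to the unit sphere (which is legitimate with a uniform constant since $\rho\geq 1/2$, $\sigma\leq 2$) and using $|\nabla_{\partial B_r}v|\leq|\nabla v|$ yields
$$G(r)\leq c\,M(r)V(r),\ M(r):=\|\mu\|_{L^s(\partial B_r)},\ V(r):=\|\nabla v\|^p_{L^{s_*}(\partial B_r)}+r^{-p}\|v\|^p_{L^{s_*}(\partial B_r)}.$$

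The main step is a three-function H\"older inequality in $r$. Choose exponents $p_1,p_2,p_3>1$ with $1/p_1+1/p_2+1/p_3=1$ so that the integrand balance
$$M^{s/p_1}\,V^{s_*/(p\,p_2)}\,(MV)^{-1/((p-1)p_3)}\equiv 1$$
holds pointwise. The two exponent constraints on $M$ and $V$ force $p_1=s(p-1)p_3$ and $p_2=s_*(p-1)p_3/p$, and the H\"older compatibility then pins down $p_3$ using the identity
$$\tfrac{1}{s}+\tfrac{p}{s_*}=1+\tfrac{p}{d-1},$$
which is a one-line consequence of the definition of $s_*$; one obtains $p_3=\tfrac{pd}{(d-1)(p-1)}$ and hence $(p-1)p_3=\tfrac{pd}{d-1}$, precisely the exponent in the target bound. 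Locating these exponents is, I expect, the only real obstacle; once done H\"older gives
$$\sigma-\rho\leq \Bigl(\int_\rho^\sigma M^s\Bigr)^{1/p_1}\Bigl(\int_\rho^\sigma V^{s_*/p}\Bigr)^{1/p_2}\Bigl(\int_\rho^\sigma (MV)^{-1/(p-1)}\Bigr)^{1/p_3}.$$

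Rearranging, raising to $-(p-1)$, and using $(p-1)p_3/p_1=1/s$, $(p-1)p_3/p_2=p/s_*$, yields
$$J(\rho,\sigma,v)\leq c\,(\sigma-\rho)^{-pd/(d-1)}\,\|\mu\|_{L^s(B_\sigma\setminus B_\rho)}\Bigl(\int_\rho^\sigma V^{s_*/p}\,dr\Bigr)^{p/s_*}.$$
To finish, observe that one of $s_*/p$, $p/s_*$ lies in $(0,1]$ and the other in $[1,\infty)$; applying $(a+b)^q\leq a^q+b^q$ when $q\in(0,1]$ and the convexity bound $(a+b)^q\leq 2^{q-1}(a^q+b^q)$ when $q\geq 1$ to each, together with $r^{-p}\leq \rho^{-p}$ on $[\rho,\sigma]$, gives
$$\Bigl(\int_\rho^\sigma V^{s_*/p}\,dr\Bigr)^{p/s_*}\leq c\bigl(\|\nabla v\|^p_{L^{s_*}(B_\sigma\setminus B_\rho)}+\rho^{-p}\|v\|^p_{L^{s_*}(B_\sigma\setminus B_\rho)}\bigr),$$
which completes the proof.
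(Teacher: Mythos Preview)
Your argument is correct and follows essentially the same route as the paper's: the paper quotes \cite[Lemma~2.1]{HS19} (which is itself obtained from your explicit one-dimensional minimization by a single H\"older in $r$), applies the same sphere-wise H\"older/Sobolev step, and then a two-function H\"older in $r$ with a carefully chosen $\delta$; your three-function H\"older simply merges these two radial H\"older applications into one, making the appearance of the exponent $\frac{pd}{d-1}$ more transparent and the proof self-contained. One small caveat: the identity $\tfrac{1}{s}+\tfrac{p}{s_*}=1+\tfrac{p}{d-1}$ on which your exponent computation rests holds only when $s_*>1$; in the borderline case $s_*=1$ it becomes the inequality $\tfrac{1}{s}+p\le 1+\tfrac{p}{d-1}$, your computation then yields $(p-1)p_3=2p-1+\tfrac{1}{s}\le\tfrac{pd}{d-1}$, and since $\sigma-\rho$ is bounded the stated bound with exponent $\tfrac{pd}{d-1}$ still follows---exactly the patch the paper makes at the end of its proof.
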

Lemma~\ref{lm1} generalizes \cite[Lemma~2.1]{BS19a} from $p=2$ to $p>1$ and we provide a proof in the appendix. 

\begin{proof}[Proof of Theorem~\ref{T:est}]\PfStart{pf1} 
% The proof is based on iteration, where in each step we improve the integrability. In the whole proof 

By standard scaling and translation arguments it suffices to suppose that $B_1\Subset \Omega$ and $u$ is locally bounded in $B_\frac{1}2$. Hence, we suppose from now on that $B_1\Subset \Omega$. In Steps~1--4 below, we consider the case $s>1$. We first derive a suitable Caccioppoli-type inequality for powers of $u_+$ (Step~1) and perform a Moser-type iteration (Steps~2--4). In Step~5, we consider the case $1+\frac1t<\frac{p}{d-1}$ which includes the case $s=1$.

\PfStep{pf1}{pf1s1} Caccioppoli  inequality. \\
Assuming $B \subset \Omega$, for any cut-off function $\eta \in \mathcal C^1_0(B)$, $\eta \ge 0$ and any $\beta \ge 1$, there holds 
\begin{equation}\label{Caccioppoli}
  \int \eta^p \lambda(x) u_+^{\beta -1} |\nabla u_+|^p \le 
  \biggl( \frac p \beta \biggr)^{p} \int u_+^{p+\beta-1} \mu(x) |\nabla \eta|^p.
\end{equation}

%First, w.l.o.g. we can assume $u \ge 0$, since otherwise we consider $u_+$ instead of $u$. 
For $\beta \ge 1$, we use the weak formulation~\eqref{def:harmonic} with $\phi:=\eta^p u_+^{\beta}$: 
\footnote{Rigorously, we are a priori not allowed to test with $u^\beta$. Instead, for $N \ge 1$ one should modify $u^\beta$ by replacing $u^\beta$ with affine $\alpha N^{\alpha-1}u - (\alpha-1)N^\beta$ in the set $u \ge N$, obtain the conclusion by testing the weak formulation with this modified function, and subsequently sends $N \to \infty$ -- for details, see~\cite[Page 460]{BS19a}.}
\begin{equation*}
 \int a(x,\nabla u) \cdot \nabla (\eta^p u_+^\beta ) \le 0.
\end{equation*}
We have $\int (a(x,\nabla u) - a(x,\nabla u_+))\cdot \nabla(\eta^p u_+) = 0$, so that we were able to replace $u$ with $u_+$ inside $a(x,\cdot)$. Applying Leibniz rule we get from the previous display
\begin{equation}\label{eq01}
 \beta \int \eta^p u^{\beta -1} a(x,\nabla u) \cdot \nabla u \le 
 - \int p \eta^{p-1} u^{\beta} a(x,\nabla u) \cdot \nabla \eta,
\end{equation}
where to simplify the notation for the rest of this proof we write $u$ instead of $u_+$. 
Using definition of $\mu$ in~\eqref{def1} in form of $|a(x,\xi)| \le \mu(x)^{\frac 1p} (a(x,\xi)\cdot \xi)^{\frac{p-1}p}$ for any $\xi \in \R^d$ (in fact we use \eqref{def1} for $\xi\neq0$ and for $\xi=0$ the inequality follow from the assumption $a(x,0)=0$), we can bound the r.h.s. in the last math display from above by
\begin{align*}%\overset{\textrm{H\"older}}
 p \int \eta^{p-1} u^{\beta} \mu(x)^{\frac 1p} (a(x,\nabla u)\cdot \nabla u)^{\frac{p-1}{p}} |\nabla \eta| &=
 p \int u^{\beta-(\beta-1)\frac{p-1}{p}} \mu(x)^{\frac 1p} |\nabla \eta| (\eta^p u^{\beta -1} a(x,\nabla u)\cdot \nabla u)^{\frac{p-1}{p}} \\
 &\le 
 p \biggl( \int u^{p+\beta-1} \mu(x) |\nabla \eta|^p \biggr)^{\frac 1p} \biggl( \int \eta^p u^{\beta-1} a(x,\nabla u)\cdot \nabla u) \biggr)^{\frac{p-1}{p}},
\end{align*}
where in the second step we applied H\"older inequality with exponents $p$ and $\frac{p}{p-1}$, respectively. Observe that the last term on the r.h.s. appears on the l.h.s. in~\eqref{eq01}, so that after absorbing it we get from~\eqref{eq01} 
\begin{equation*}
 \beta \biggl( \int \eta^p u^{\beta -1} a(x,\nabla u) \cdot \nabla u\biggr)^{\frac 1p} \le 
 p \biggl( \int u^{p+\beta-1} \mu(x) |\nabla \eta|^p \biggr)^{\frac 1p},
\end{equation*}
which after taking the $p$-th power turns into
\begin{equation*}
 \int \eta^p u^{\beta -1} a(x,\nabla u) \cdot \nabla u  \le 
 \biggl( \frac p \beta \biggr)^{p} \int u^{p+\beta-1} \mu(x) |\nabla \eta|^p.
\end{equation*}
By definition of $\lambda$ in~\eqref{def1} in form of $\lambda(x)|\xi|^p \le a(x,\xi)\cdot \xi$ for any $\xi \in \R^d$, one has $\lambda(x) |\nabla u|^p \le a(x,\nabla u)\cdot \nabla u$, thus implying the claimed Caccioppoli  inequality \eqref{Caccioppoli}.
%\begin{equation}
%  \int \eta^p \lambda(x) u^{\beta -1} |\nabla u|^p \le 
%  \biggl( \frac p \beta \biggr)^{p} \int u^{p+\beta-1} \mu(x) |\nabla \eta|^p.
%\end{equation}
 
\PfStep{pf1}{pf1s2} Improvement of integrability. \\
We claim that there exists $c=c(d,p,s)\in[1,\infty)$ such that for $\frac 12 \le \rho < \sigma \le 1$ and $\alpha \ge 1$ it holds
\begin{equation}\label{eq04}
 \|\nabla(u^\alpha)\|_{L^{\frac{pt}{t+1}}(B_\rho)}
 \le 
  c (\sigma-\rho)^{-\frac{d}{d-1}} \Lambda(B_\sigma)^\frac{1}{p}
 \|u^\alpha\|_{W^{1,s_*}(B_\sigma \setminus B_\rho)}.
\end{equation}

Let $\eta \in \mathcal C^1_0(B_\sigma)$, $\eta \ge 0$, with $\eta = 1$ in $B_\rho$. First, we rewrite the Caccioppoli  inequality \eqref{Caccioppoli} from Step~\ref{pf1s1} as inequality for $u^{1 + \frac{\beta-1}{p}}$:
\begin{equation}\label{eq02}
  \left( \frac{p}{p+\beta-1} \right)^p \int \eta^p \lambda(x) |\nabla (u^{1+\frac{\beta-1}{p}})|^p \le 
  \biggl( \frac p \beta \biggr)^{p} \int \mu(x) (u^{1+\frac{\beta-1}{p}})^p |\nabla \eta|^p.
\end{equation}
Calling $v:=u^{1 + \frac{\beta-1}{p}}$, we can estimate the r.h.s. with the help of Lemma~\ref{lm1}, yielding
\begin{equation*}
  \int \eta^p \lambda(x) |\nabla v|^p
  \le c\left( \frac{p+\beta-1}{\beta} \right)^p (\sigma-\rho)^{-\frac{pd}{d-1}} \|\mu\|_{L^s(B_\sigma \setminus B_\rho)} \bigl( \|\nabla v\|^p_{L^{s_*}(B_\sigma \setminus B_\rho)} + \rho^{-p} \|v\|^p_{L^{s_*}(B_\sigma \setminus B_\rho)} \bigr). 
\end{equation*}
Using H\"older inequality with exponents $(\frac{t+1}{t},t+1)$ and the fact that $\eta = 1$ in $B_\rho$, we see that 
\begin{equation*}%\label{eq03}
 \|\nabla v\|_{L^{\frac{pt}{t+1}}(B_\rho)}^p 
 \le \|\lambda^{-1}\|_{L^t(B_\rho)} \|\lambda |\nabla v|^p \|_{L^1(B_\rho)}
 \le \|\lambda^{-1}\|_{L^t(B_\rho)} \int \eta^p \lambda(x) |\nabla v|^p.  
\end{equation*}
Using that $\frac 12 \le \rho \le \sigma \le 1$, combination of two previous relations yields
\begin{equation*}
 \|\nabla v\|_{L^{\frac{pt}{t+1}}(B_\rho)}^p 
 \le 
 c\left( \frac{p+\beta-1}{\beta} \right)^p (\sigma-\rho)^{-\frac{pd}{d-1}} \Lambda(B_\sigma) 
 \|v\|^p_{W^{1,s_*}(B_\sigma \setminus B_\rho)},
\end{equation*}
which after taking $p$-root turns into 
\begin{equation*}
 \|\nabla(u^\alpha)\|_{L^{\frac{pt}{t+1}}(B_\rho)}
 \le 
  c (\sigma-\rho)^{-\frac{d}{d-1}} \Lambda(B_\sigma)^\frac{1}{p}
 \|u^\alpha\|_{W^{1,s_*}(B_\sigma \setminus B_\rho)}, 
\end{equation*}
with $\alpha := 1+\frac{\beta-1}{p}$. %Observe that the exponent $\frac{pt}{t+1}$ on the l.h.s. is strictly larger than $s_*$ from the r.h.s. provided~\eqref{pqcond} holds. 

%By avoiding optimization of the cut-off, employing~\eqref{eq02} with $\eta$ being a classical cut-off for $B_\rho$ in $B_\sigma$ with $|\nabla \eta| \le 2/(\sigma-\rho)$, in combination with~\eqref{eq03}, we see that
%\begin{equation}\label{eq06}
% \|\nabla (u^\alpha)\|_{L^{\frac{pt}{t+1}}(B_\rho)} \le 
% c (\sigma-\rho)^{-1} \Lambda(B_\sigma)^\frac{1}{p} \| u^\alpha \|_{L^{\frac{ps}{s-1}}(B_\sigma)}.
%\end{equation}
%%%%%%%%%%%%%%%%%%
\PfStep{pf1}{pf1s3} One-step Improvement.

First, we note that \eqref{pqcond} and $t>\frac1{p-1}$ imply $\delta:= \frac{1}{s_*} - \frac1p(1+\frac{1}{t})>0$. In particular it holds $s_*<\frac{t p}{t+1}$. We claim that there exists $c=c(d,s,t,p)$ such that for $\frac 12 \le \rho < \sigma \le 1$ there holds
\begin{equation}\label{eq05}
 \|u^{\chi\alpha}\|_{W^{1,s_*}(B_\rho)}^{\frac{1}{\chi \alpha}} \le \biggl( \frac{ c\Lambda(B_\sigma)^\frac{1}{p} }{(\sigma-\rho)^{\frac{d}{d-1}} } \biggr)^{\frac{1}{\chi \alpha}} 
 \|u^\alpha\|_{W^{1,s_*}(B_\sigma)}^\frac{1}{\alpha},
\end{equation}
where $\chi:=1+\delta>1$. Using H\"older inequality with exponent $\frac{pt}{(t+1)s_*}>1$ and its dual exponent $\frac{pt}{pt-(t+1)s_*}=\frac1{\delta s_*}$ we get 
\begin{align*}
 \biggl( \int_{B_\rho} |\nabla(u^{(1+\delta)\alpha})|^{s_*} \biggr)^{\frac1{s_*}} 
 &= (1+\delta)\alpha \biggl( \int_{B_\rho} |\nabla u|^{s_*} u^{(\alpha-1)s_*} u^{\alpha \delta s_*} \biggr)^{\frac1{s_*}}
 = (1+\delta) \biggl( \int_{B_\rho} |\nabla(u^{\alpha})|^{s_*} u^{\alpha \delta s_*} \biggr)^{\frac1{s_*}}  
 \\
 &\le (1+\delta) \biggl( \int_{B_\rho} |\nabla(u^\alpha)|^{\frac{pt}{t+1}} \biggr)^{\frac{t+1}{pt}} 
 \biggl( \int_{B_\rho} u^\alpha \biggr)^{\delta}.
\end{align*}
Combining the above estimate with \eqref{eq04} from Step~\ref{pf1s2}, we get (recall $\chi = 1+\delta$) 
\begin{equation*}
 \|\nabla(u^{\chi\alpha})\|_{L^{s_*}(B_\rho)} \le c (\sigma-\rho)^{-\frac{d}{d-1}} \Lambda(B_\sigma)^\frac{1}{p}
 \|u^\alpha\|_{W^{1,s_*}(B_\sigma)}^\chi,
\end{equation*}
where we hided $\chi = 1 + \delta < \frac{d}{d-1}$ into $c$. In order to have full $W^{1,s_*}(B_\rho)$-norm also on the l.h.s., using $s_* \ge 1$ as well as $\chi < \frac{d}{d-1}$ we can use Sobolev inequality to the effect
\begin{equation*}
 \| u^{\chi\alpha} \|_{L^{s_*}(B_\rho)} \le c \| u^{\alpha} \|_{W^{1,s_*}(B_\rho)},
\end{equation*}
thus obtaining the claim.

\PfStep{pf1}{pf1s4} Iteration.\\
% \begin{equation}
% \|u\|_{L^\infty(B_{1/2})} \le 
%  c \Lambda(B_\sigma)^{\frac 1 \gamma \frac{s}{s-1} (1+\frac 1 \delta)} \|u\|_{L^{\gamma}(B_2)}. 
% \end{equation}
We iterate the outcome of Step~\ref{pf1s3}. For $\bar \alpha \ge 1$ and $n \in \mathbb{N}$ let $\alpha_n := \bar \alpha \chi^{n-1}$, $\rho_n := \frac 12 + \frac{1}{2^n+1}$, $\sigma_n := \rho_n + \frac{1}{2^{n+1}} = \rho_{n-1}$. Then~\eqref{eq05} from Step~\ref{pf1s4} with $\alpha := \alpha_n$ has the form 
\begin{equation}\label{eq05}
 \|u^{\alpha_{n+1}}\|_{W^{1,s_*}(B_{\rho_n})}^{\frac{1}{\alpha_{n+1}}} \le ( c\Lambda(B_1)^\frac{1}{p}4^n)^{\frac{1}{\bar\alpha \chi^n}} 
 \|u^{\alpha_n}\|_{W^{1,s_*}(B_{\rho_{n-1}})}^\frac{1}{\alpha_n}.
\end{equation}
Using that $L^p$ approximates $L^\infty$ as $p \to \infty$, we see that 
\begin{align}\label{est:sup:general}
 \|u\|_{L^\infty(B_{1/2})} 
 &\le 
 \biggl( \prod_{n=1}^\infty ( c\Lambda(B_\sigma)^\frac{1}{p}4^n)^{\frac{1}{\bar\alpha \chi^n}} \biggr) 
 \|u^{\bar\alpha}\|_{W^{1,s_*}(B_1)}^\frac{1}{\bar\alpha}\notag
 \\
 &\le 
 c\Lambda(B_\sigma)^{\frac{1}{p\bar \alpha}\frac{1}{\chi-1}} 
 \|u^{\bar\alpha}\|_{W^{1,s_*}(B_1)}^\frac{1}{\bar\alpha},
\end{align}
which for $\bar \alpha = 1$ yields the desired claim where we use $\chi=1+\delta$ and $s_*\leq \frac{tp}{t+1}$. 

\PfStep{pf1}{pf1s5} The remaining case $1+\frac1t<\frac{p}{d-1}$.

Using Fubini theorem, we can choose a generic radius $r_0 \in (\frac 12,1)$ such that 
 \begin{equation*}
    \|u_+\|_{W^{1,\frac{pt}{t+1}}(S_{r_0})}^\frac{pt}{t+1} \le 2\|u_+\|_{W^{1,\frac{pt}{t+1}}(B_1)}^\frac{pt}{t+1}. 
 \end{equation*}
We test the weak formulation of $-\nabla \cdot a(x,\nabla u)\leq0$ see \eqref{def:harmonic} with the non-negative test function $\phi := (u_+-\sup_{S_{r_0}} u_+)_+$, which obviously vanishes on $S_{r_0}$ and can be therefore trivially extended by zero to the whole domain $\Omega$. This yields
\begin{equation*}
 0 \stackrel{\eqref{def:harmonic}}\ge \int_{B_{r_0}} a(x,\nabla u)\cdot \nabla \phi = \int_{B_{r_0}} a(x,\nabla \phi)\cdot \nabla \phi \stackrel{\eqref{def1}}\ge \int_{B_{r_0}} \lambda(x) |\nabla \phi|^p.
\end{equation*}
In particular, we see that $\nabla \phi = 0$ a.e. in $B_{r_0}$, hence $\phi \equiv 0$ and thus 
\begin{equation*}
 \|u_+\|_{L^\infty(B_{\frac 12})} \le \|u_+\|_{L^\infty(B_{r_0})} \le \sup_{S_{r_0}} u_+. 
\end{equation*}

Using that $\frac{pt}{t+1} > d-1$, which follows from $1+\frac 1t < \frac p{d-1}$, we have by Sobolev embedding that $\sup_{S_{r_0}} u_+ \le c\|u_+\|_{W^{1,\frac{pt}{t+1}}(S_{r_0})}$ for some $c=c(d,p,t)>0$ which by the above choice of $r_0$ completes the claim.

\end{proof}

\begin{proof}[Proof of Proposition~\ref{P:2d}]
This follows exactly as in Step~5 of the proof of Theorem~\ref{T:est} using that for $d=2$ it holds $\sup_{S_{r_0}} u_+ \le c\|u_+\|_{W^{1,1}(S_{r_0})}$.
\end{proof}

We close this section by deriving from Theoem~\ref{T:est} in the case $s>1$ an $L^\infty-L^\gamma$ estimate . 

\begin{corollary}\label{C:est}
 Let $d \ge 2$, $\Omega\subset \R^d$ and $p\in(1,\infty)$. Moreover, let $s\in(1,\infty]$ and $t\in(\frac1{p-1},\infty]$ satisfy \eqref{pqcond}. Let $a : \Omega \times \R^d \to \R^d$ be a Caratheodory function with $a(\cdot,0)\equiv0$ such that $\lambda$ and $\mu$ defined in~\eqref{def1} satisfy $\mu \in L^s(\Omega)$ and $\frac 1\lambda \in L^t(\Omega)$. Then, any weak subsolution $u$ of \eqref{eq} and any $\gamma>0$ there exists $c=c(\gamma,d,p,s,t) \in [1,\infty)$ such that for any ball $B_{R} \subset \Omega$ 
 \begin{equation*}
  \sup_{B_{R/2}} u 
  \le 
 c \Lambda(B_R)^{\frac1\gamma\frac{s}{s-1}(1+\frac1\delta)}    \biggl(\fint_{B_R}u_+^\gamma\biggr)^\frac{1}{\gamma}.   
 \end{equation*}

\end{corollary}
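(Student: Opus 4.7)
The proof is a standard bootstrap on top of Theorem~\ref{T:est}, organised in three stages. Fix $\gamma>0$ and two concentric balls with radii $R/2 \le \rho < \sigma \le R$, and set $\tau := (\rho+\sigma)/2$. By running the Moser iteration in the proof of Theorem~\ref{T:est} with the radii chosen to shrink from $\tau$ down to $\rho$ (rather than from $1$ down to $1/2$), one obtains for a universal $c_0=c_0(d,p,s,t)$
$$\sup_{B_\rho} u_+ \le c\,\Lambda(B_\tau)^{1/(p\delta)}\bigl(\tau/(\tau-\rho)\bigr)^{c_0}\|u_+\|_{\underline W^{1,\frac{tp}{t+1}}(B_\tau)}.$$
The task is then to replace the $\underline W^{1,tp/(t+1)}$-norm by the averaged $L^\gamma$-norm on $B_R$.

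\textbf{From $W^{1,tp/(t+1)}$ to $L^{ps/(s-1)}$.} Write $q:=tp/(t+1)$ and $r:=ps/(s-1)$; a direct manipulation gives $q\le r$, so the $\underline L^q$-part of the $\underline W^{1,q}$-norm on $B_\tau$ is dominated by the $\underline L^r$-norm on $B_\sigma$ via Jensen. For the gradient part, apply the Caccioppoli inequality~\eqref{Caccioppoli} with $\beta=1$ and a cut-off between $B_\tau$ and $B_\sigma$ (of width $(\sigma-\rho)/2$) to get $\int_{B_\tau}\lambda|\nabla u_+|^p \le c(\sigma-\rho)^{-p}\int_{B_\sigma}\mu u_+^p$; then split $|\nabla u_+|^q = (\lambda|\nabla u_+|^p)^{t/(t+1)}\lambda^{-t/(t+1)}$ and apply H\"older with exponents $((t+1)/t,t+1)$ to obtain $\|\nabla u_+\|_{L^q(B_\tau)} \le \|\lambda^{-1}\|_{L^t(B_\tau)}^{1/p}\bigl(\int\lambda|\nabla u_+|^p\bigr)^{1/p}$. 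A final H\"older step $\int\mu u_+^p \le \|\mu\|_{L^s}\|u_+\|_{L^r}^p$, together with the identity $\|\lambda^{-1}\|_{L^t(B)}\|\mu\|_{L^s(B)} = |B|^{1/t+1/s}\Lambda(B)$ and the conversion between absolute and averaged norms (the volume factors telescope exactly), produces
$$\sup_{B_\rho} u_+ \le c\,\Lambda(B_\sigma)^{A}\bigl(\sigma/(\sigma-\rho)\bigr)^{c_1}\biggl(\fint_{B_\sigma} u_+^r\biggr)^{1/r},\qquad A := \tfrac{1}{p}\Bigl(1+\tfrac{1}\delta\Bigr).$$

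\textbf{Interpolation, Young, and iteration.} If $\gamma\ge r$, Jensen immediately yields the claim with a non-increased exponent of $\Lambda$. Otherwise interpolate $\|u_+\|_{L^r(B_\sigma)} \le \|u_+\|_{L^\infty(B_\sigma)}^{1-\theta}\|u_+\|_{L^\gamma(B_\sigma)}^{\theta}$ with $\theta = \gamma/r = \gamma(s-1)/(ps)\in(0,1)$ and apply Young's inequality, splitting the $L^\infty$-factor with a small weight to get, for $M(r):=\sup_{B_r}u_+$,
$$M(\rho) \le \tfrac12 M(\sigma) + c\,\Lambda(B_R)^{A/\theta}\bigl(\sigma/(\sigma-\rho)\bigr)^{c_1/\theta}\biggl(\fint_{B_R} u_+^\gamma\biggr)^{1/\gamma}$$
for all $R/2\le\rho<\sigma\le R$. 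The standard absorbing iteration lemma applied to the shrinking sequence $\rho_n = R/2 + R/2^{n+1}$ removes the $M(\sigma)$-term and yields
$$M(R/2) \le c\,\Lambda(B_R)^{A/\theta}\biggl(\fint_{B_R} u_+^\gamma\biggr)^{1/\gamma},\qquad A/\theta = \tfrac{1}{\gamma}\tfrac{s}{s-1}\Bigl(1+\tfrac{1}\delta\Bigr),$$
which is precisely the claim.

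\textbf{Main obstacle.} There is no new analytic idea, so the difficulty is purely bookkeeping: one has to track the exponent of $\Lambda$ carefully through the Caccioppoli step (which adds $1/p$ to $1/(p\delta)$) and through Young (which divides by $\theta$), so that the final exponent comes out exactly as $\tfrac{1}{\gamma}\tfrac{s}{s-1}(1+\tfrac{1}\delta)$. Simultaneously, the factors $(\sigma-\rho)^{-c_1/\theta}$ must survive as genuine negative powers for the absorbing iteration lemma to apply, and the volume factors arising from converting between $\|\mu\|_{L^s}$, $\|\lambda^{-1}\|_{L^t}$, $\Lambda$, and averaged norms on balls of slightly different radii must telescope cleanly. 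A minor but necessary preliminary check is $q\le r$, which holds for any $s>1$, $t>0$ and ensures the $\underline L^q$-part of the $\underline W^{1,q}$-norm never has to be treated separately.
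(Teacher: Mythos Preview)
Your approach is essentially the same as the paper's and is correct for $\gamma \le r := ps/(s-1)$: the paper also combines the iteration estimate~\eqref{est:sup:general} with a single Caccioppoli step to pass from $W^{1,tp/(t+1)}$ to $L^{r}$, and then for small $\gamma$ runs exactly the interpolation--Young--absorbing-iteration you describe, arriving at the exponent $A/\theta=\frac{1}{\gamma}\frac{s}{s-1}\bigl(1+\frac{1}{\delta}\bigr)$.

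There is one slip in the case $\gamma>r$. You assert that Jensen ``immediately yields the claim with a non-increased exponent of $\Lambda$'', but the direction is reversed: your exponent $A=\frac{1}{p}\bigl(1+\frac{1}{\delta}\bigr)$ is \emph{strictly larger} than the stated exponent $\frac{1}{\gamma}\frac{s}{s-1}\bigl(1+\frac{1}{\delta}\bigr)$ whenever $\gamma>r$, and since $\Lambda\ge 1$ this gives a \emph{weaker} inequality than the one claimed. The paper avoids this by keeping the power $\alpha\ge 1$ free in the Caccioppoli step, i.e.\ it uses~\eqref{eq02} with $\beta=1+p(\alpha-1)$ rather than $\beta=1$, obtaining
\[
\|u_+^{\alpha}\|_{W^{1,tp/(t+1)}(B_{1/2})}^{1/\alpha}\le c\,\Lambda(B_1)^{1/(\alpha p)}\|u_+\|_{L^{\alpha r}(B_1)},
\]
and then feeds this into~\eqref{est:sup:general} with $\bar\alpha=\alpha$. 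Choosing $\alpha=\tfrac{s-1}{ps}\gamma\ge 1$ produces directly the exponent $\frac{1}{\alpha p}\bigl(1+\frac{1}{\delta}\bigr)=\frac{1}{\gamma}\frac{s}{s-1}\bigl(1+\frac{1}{\delta}\bigr)$ without any interpolation step. The fix is minor, but without it the precise exponent in the statement is not reached for large $\gamma$.
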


\begin{proof}\PfStart{pf2}

Without loss of generality we consider $R=1$ and suppose that $B_1\Subset\Omega$. Caccioppoli inequality \eqref{eq02} with $\beta=1+p(\alpha-1)$ for $\alpha\geq1$ and $\eta\in C_c^1(B_1)$ with $\eta=1$ on $B_\frac12$ and $|\nabla \eta|\leq 2$ and H\"older inequality yield
\begin{align*}
\| \nabla (u_+^\alpha)\|_{L^\frac{pt}{t+1}(B_{1/2})}^p\leq&\|\lambda^{-1}\|_{L^t(B_1)}\int_{B_1}\eta^p\lambda|\nabla (u_+^\alpha)|^p\leq (2p)^p\|\lambda^{-1}\|_{L^t(B_1)}\int_{B_1}\mu u_+^{\alpha p}\\
\leq& (2p)^p\|\lambda^{-1}\|_{L^t(B_1)}\|\mu\|_{L^s(B_1)}\|u_+^\alpha\|_{L^{\frac{s}{s-1}p}(B_1)}.
\end{align*}
The above inequality combined with $\frac{tp}{t+1}\leq p\leq \frac{sp}{s-1}$ implies $\|u_+^\alpha\|_{W^{1,\frac{tp}{t+1}}(B_{1/2})}^\frac1\alpha \le c \Lambda(B_1)^\frac{1}{\alpha p} \| u_+ \|_{L^{\frac{\alpha ps}{s-1}}(B_1)}$ (note that $1\leq\Lambda(B_r)$) for some $c=c(d,p)\in[1,\infty)$. Hence, we have in combination with \eqref{est:sup:general} that
%
%in the form  
%By~\eqref{eq06} with $\rho=1$ and $\sigma=2$ holds $\| \nabla (u^{\bar \alpha})\|_{L^\frac{pt}{t+1}(B_1)} \le c \Lambda(B_2)^\frac{1}{p} \| (u^{\bar \alpha}) \|_{L^{\frac{ps}{s-1}}(B_2)}$. Using that $\frac{ps}{s-1} > s_*$ and $s_* \le \frac{pt}{t+1}$, we get that $\|u^{\bar \alpha}\|_{W^{1,s_*}(B_1)} \le c (1+\Lambda(B_2)^\frac{1}{p}) \| (u^{\bar \alpha}) \|_{L^{\frac{ps}{s-1}}(B_2)}$, which in turn can be used above to get 
\begin{equation}\label{eq06}
 \|u_+\|_{L^\infty(B_{1/4})} \le 
 c \Lambda(B_1)^{\frac{1}{ \alpha p} (1+\frac1{\delta})} \|u_+\|_{L^{\frac{\alpha ps}{s-1}}(B_1)},
\end{equation}
where $c=c(\alpha, d,p,t,s)\in[1,\infty)$.

From estimate \eqref{eq06} the claim follows by routine arguments and we only sketch the idea (see \cite[Proof of Theorem 3.3, Step 2]{BS19a} for precise arguments in the case $p=2$). By scaling and translation, we deduce from \eqref{eq06} that for all $\rho>0$ and $x\in B_1$ such that $B_\rho(x)\subset B_1$ it holds for $\alpha\geq1$
\begin{equation*}%\label{eq06}
 \|u_+\|_{L^\infty(B_{\rho/4}(x))} \le 
 c \Lambda(B_\rho(x))^{\frac{1}{\alpha p} (1+\frac1{\delta})} \rho^{-\frac{d}p(1-\frac1s)}\|u_+\|_{L^{\frac{\alpha ps}{s-1}}(B_\rho(x))},
\end{equation*}
where $c$ is as in \eqref{eq06}. Combining the above estimate with a simple covering argument, we obtain that there exists $c=c(\alpha,d,p,s,t)\in[1,\infty)$ such that for all $\theta\in(0,1)$ and $r\in(0,1]$ it holds
$$
\|u_+\|_{L^\infty(B_{\theta r})} \le  c \Lambda(B_r)^{\frac{1}{\alpha p} (1+\frac1{\delta})} (1-\theta)^{-\kappa}r^{-d\frac{s-1}{\alpha ps}}\|u_+\|_{L^{\frac{\alpha ps}{s-1}}(B_r)},
$$
where $\kappa:=\frac{d}{\alpha p}((\frac1t+\frac1s)(1+\frac1{\delta})+1-\frac1s)$ which is the claim for all $\gamma \geq\frac{ps}{s-1} $ (by choosing $\alpha=\frac{s-1}{ps}\gamma$). The claim for $\gamma\in(0,\frac{ps}{s-1})$ follows by a standard interpolation and iteration argument see e.g.\ the textbook reference \cite[p.\ 75]{HL} in the uniformly elliptic case or as mentioned above \cite[Proof of Theorem 3.3, Step 2]{BS19a} for a closely related setting.
 \end{proof}

\section{Counterexample, proof of Theorem~\ref{T:negative}}

\begin{proof}[Proof of Theorem~\ref{T:negative}]\PfStart{pf3} 
The following construction is very much inspired by a construction in \cite{FSS98} in the linear case, that is $p=2$, and $d=4$ (which was already extended to $d\geq3$ in \cite{Schwarzmannthesis}). 

Let $d\geq3$. Throughout the proof, we set
$$
x=(x_1,\dots,x_d)=(x_1,x')\quad\mbox{and}\quad |x'|=\sqrt{\sum_{j=2}^dx_j^2}.
$$
For any $p\in(1,\infty)$ and $\theta\in[0,1]$, we define $\lambda_\theta(x):=\omega_\theta(|x'|)$ where $\omega_\theta:(0,1)\to\R_+$ is defined as
\begin{equation}\label{def:omegatheta}
\omega_\theta(r)=\begin{cases}
(i+1)^{(p-1)\theta} 4^{-pi\theta}&\mbox{when $r\in[\frac12 4^{-i},4^{-i})$},\\
((i+1)^{-(p-1)}4^{pi})^{1-\theta}&\mbox{when $r\in[\frac14 4^{-i},\frac12 4^{-i})$}
\end{cases}
\end{equation}
for $i\in\mathbb N$. We will construct an explicit subsolution to $-\nabla \cdot (\lambda_\theta |\nabla v|^{p-2}\nabla v)=0$, which is of the form
\begin{equation}\label{def:v}
v(x)=e^{\alpha x_1}\phi(|x'|)
\end{equation}
for some parameter $\alpha=\alpha(d,p)>0$ and $\phi:(0,1)\to\R$ is defined by
\begin{equation}\label{def:phi}
\phi(r)=\begin{cases}
i+\frac{\eta_i}{2^Q-1}((4^ir)^{-Q}-1)&\mbox{when $r\in[\frac12 4^{-i},4^{-i})$},\\
(i+1)-(1-\eta_i)(4^{i+1}r-1)^2&\mbox{when $r\in[\frac14 4^{-i},\frac12 4^{-i})$}
\end{cases},\quad\mbox{with}\quad Q=\begin{cases}\max\{d-3,1\}&\mbox{if $p\geq2$}\\ \frac{d-2}{p-1}-1&\mbox{if $1<p<2$}\end{cases}
\end{equation}
where $\eta_i\in[0,1]$ will be specified below. Note that $Q>0$ and $\phi$ is continuous by definition. We choose $\eta_i\in(0,1)$ such that the flux $\lambda_\theta|\nabla v|^{p-2}\nabla v$ is continuous at $|x'|=\frac12 4^{-i}$ for every $i\in\mathbb N$. More precisely, we set $\eta_i$ to be the largest constant (in $[0,1]$) satisfying
\begin{equation}\label{def:etai}
F_i(\eta_i)=0,
\end{equation}
where $F_i:(0,1]\to\R$ is given by
\begin{align*}
F_i(\eta):=&\sqrt{(\alpha (i+\eta)4^{-i})^2+(C_Q\eta)^2}^{p-2}C_Q\eta \\
&- \sqrt{(\alpha(i+\eta)(i+1)^{-1})^2+(8(1-\eta)4^{i}(i+1)^{-1})^2}^{p-2} 8(1-\eta)4^{2i}(i+1)^{-1}
\end{align*}
with
\begin{equation*}%\label{def:CQ}
C_Q=Q\frac{2^{Q+1}}{2^Q-1}.
\end{equation*}
Note that $\eta_i$ is well-defined since $F_i:(0,1)\to\R$ is continuous with
$$
\lim_{\eta\to0}F_i(\eta)=- \sqrt{(\alpha i)^2+(2\cdot 4^{i+1})^2}^{p-2} 8\cdot4^{2i}(i+1)^{-(p-1)}<0
$$
and
$$
\lim_{\eta\to1}F_i(\eta)=\sqrt{(\alpha (i+1)4^{-i})^2+C_Q^2}^{p-2}C_Q>0.
$$
The definition of $\eta_i$ is rather implicit and we provide now some explicit bounds on $\eta_i$ which will be useful for later computations. We distinguish two cases. For $p\geq2$ and $\alpha\geq C_Q$, we have that
\begin{equation}\label{eta:loweboundgeq}
\exists j=j(d,p)\geq2\mbox{ such that $\forall i\geq j$:}\quad\eta_i \geq 1-8^{-1} (4^{p-2} C_Q) 4^{-2i}(i+1)=:\underline \eta_i.
\end{equation} 
Indeed, let $j=j(d,p)\geq2$ be such that $\underline \eta_i\in(0,1)$ for all $i\geq j$. By definition of $\eta_i$, it suffices to show that $F_i(\underline \eta_i)\leq0$ for $i\geq j$. We have
\begin{align*}
F_i(\underline \eta_i)\leq&\sqrt{(\alpha (i+1)4^{-i})^2+C_Q^2}^{p-2}C_Q - \sqrt{(\alpha i/(i+1))^2}^{p-2} (4^{p-2} C_Q)\\
=&\sqrt{((i+1)4^{-i})^2+(C_Q/\alpha)^2}^{p-2}\alpha^{p-2}C_Q - \alpha^{p-2}\sqrt{(i/(i+1))^2}^{p-2} (4^{p-2} C_Q)\\
\leq&\alpha^{p-2}(2^{p-2}C_Q-2^{-(p-2)}(4^{p-2} C_Q))=0,
\end{align*}
where we used for the last inequality $(i+1)4^{-i}\leq 1$ and $i/(i+1)\geq\frac12$ for $i\geq1$ and $\alpha\geq C_Q$. 

In the case $p\in(1,2)$, we have for $\alpha\geq 2^\frac{2-p}{p-1}C_Q$ that
\begin{equation}\label{eta:loweboundleq}
\exists j=j(\alpha,d,p)\geq2\mbox{ such that $\forall i\geq j$:}\quad\eta_i \geq 1-8^{-1} \alpha 4^{-2i}(i+1)=:\overline \eta_i.
\end{equation} 
Indeed, this follows as above from
\begin{align*}
F_i(\overline \eta_i)\leq&C_Q^{p-1} - \sqrt{\alpha^2+(\alpha 4^{-i})^2}^{p-2} \alpha\leq C_Q^{p-1}-\alpha^{p-1}2^{p-2}\leq0.
\end{align*}
%

%. We compute
%%
%\begin{equation}\label{normnablav}
%|\nabla v|=\sqrt{\alpha^2 \phi^2+\phi'^2}e^{\alpha x_1}\end{equation}
%OldQ=\kappa\frac{d-2}{p-1}-1\qquad\mbox{with $\kappa>1$}

\PfStep{pf3}{pf3s1} We show that for every $\alpha\geq\max\{1,2^{\frac{2-p}{p-1}}\}C_Q$, the function $v$ defined in \eqref{def:v} has finite energy, that is $\int_{B_1}\lambda_\theta (|v|^p+|\nabla v|^p)<\infty$ provided $(1-\theta)p<d-1$. 

We show first $\int_{B_1}\lambda_\theta |v|^p<\infty$. For this, we observe that $0\leq\phi(r)\leq\log(4/r)$ for all $r\in(0,1)$. Indeed, $\phi\geq0$ is clear from the definition \eqref{def:phi} and for $r\in[\frac14 4^{-i},4^{-i})$, we have 
$$\phi(r)\leq i+1=\log_4(4^{i+1})\leq \log_4(\tfrac{4}r)\leq \log(\tfrac{4}r).$$
Similarly, we get
\begin{equation}\label{estabove:omegatheta}
\omega_\theta(r)\leq \begin{cases}
((2r)^p\log(4/r)^{p-1})^\theta&\mbox{when $r\in[\frac12 4^{-i},4^{-i})$},\\
(r^p\log(2/r)^{p-1})^{-(1-\theta)}&\mbox{when $r\in[\frac14 4^{-i},\frac12 4^{-i})$}
\end{cases}.
\end{equation}
Hence, there exists $C=C(\alpha,d,p)>0$ such that
\begin{align*}
\int_{B_1}\lambda_\theta v^p\,dx\leq C\int_0^1r^{-(1-\theta)p}\log(2/r)^{p-(1-\theta)(p-1)}r^{d-2}\,dr<\infty,
\end{align*}
where the last integral is finite since $(1-\theta)p< d-1$.%\footnote{M: internal remark: note that in the endpoint case $(1-\theta)p=d-1$ the above integrand reads  $r^{-1}\log(2/r)^{-d}$ which is integrable).} 

Next, we show $\int_{B_1}\lambda_\theta |\nabla v|^p<\infty$. For this we compute the gradient of $v$:
\begin{equation}\label{normnablav}
\nabla v=\begin{pmatrix}\alpha \phi \\ \phi' \frac{x'}{|x'|}\end{pmatrix}e^{\alpha x_1}\quad\mbox{and}\quad|\nabla v|=\sqrt{\alpha^2 \phi^2+\phi'^2}e^{\alpha x_1}.%\quad |\nabla v|^{p-2}\nabla v=\sqrt{\alpha^2\phi^2+\phi'^2}^{p-2}\begin{pmatrix}\alpha \phi \\ \phi' \frac{x'}{|x'|}\end{pmatrix}e^{\alpha (p-1) x_1}.
\end{equation}
Moreover, we compute
\begin{equation}\label{phiprime}
\phi'(r)=\begin{cases}
-Q\frac{\eta_i}{2^Q-1}(4^ir)^{-Q}r^{-1}&\mbox{when $r\in(\frac12 4^{-i},4^{-i})$},\\
-2(1-\eta_i)4^{i+1}(4^{i+1}r-1)&\mbox{when $r\in(\frac14 4^{-i},\frac12 4^{-i})$}
\end{cases}
\end{equation}
and for later usage
\begin{equation}\label{phiprime2}
\phi''(r)=\begin{cases}
Q(Q+1)\frac{\eta_i}{2^Q-1}(4^ir)^{-Q}r^{-2}&\mbox{when $r\in(\frac12 4^{-i},4^{-i})$},\\
-2(1-\eta_i)4^{2(i+1)}&\mbox{when $r\in(\frac14 4^{-i},\frac12 4^{-i})$}
\end{cases}.
\end{equation}
From \eqref{eta:loweboundgeq} and \eqref{eta:loweboundleq}, we obtain that there exists $C=C(\alpha,d,p)>0$ such that $0\leq 1-\eta_i\leq C 4^{-2i}(i+1)$ for $i\geq j(\alpha,d,p)$ and thus in combination with \eqref{phiprime}  there exists $C=C(\alpha,d,p)>0$ such that
\begin{equation*}%0\leq\phi(r)\leq\log(4/r)\quad\mbox{for all $r\in(0,1)$}\qquad\mbox{and}\qquad
 |\phi'(r)|\leq C\begin{cases}
r^{-1}&\mbox{when $r\in(\frac12 4^{-i},4^{-i})$},\\
\log(2/r)r&\mbox{when $r\in(\frac14 4^{-i},\frac12 4^{-i})$}
\end{cases}
\end{equation*}
for all $i\geq j$. Hence, we find $C=C(\alpha,d,p)>0$ such that
\begin{align*}
\int_{B_1}\lambda_\theta |\nabla v|^p\leq C+C\int_0^1 \biggl((r^p\log(2/r)^{p-1})^\theta r^{-p}+(r^p\log(2/r)^{p-1})^{-(1-\theta)}(\log(2/r)r)^p\biggr)r^{d-2}\,dr<\infty,
\end{align*}
where we use again $(1-\theta)p<d-1$. Finally, it is easy to check that the sequence $(v_k)_k$ defined by $v_k(x)=e^{\alpha x_1}\phi_k(|x'|)$ with $
\phi_k(x)=\phi(x)$ if $|x|>4^{-k}$ and $\phi_k(x)=k$ if $|x'|\leq 4^{-k}$ is a sequence of Lipschitz functions satisfying $\lim_{k\to\infty}\int_{B_1}\lambda_\theta(|v-v_k|^p+|\nabla v-\nabla v|^p)\to0$ as $k\to\infty$ and a straightforward regularization shows that $v$ in $H^{1,p}(B_1,a)$ with $a(x,\xi):=\lambda_\theta(x)|\xi|^{p-2}\xi$.

\PfStep{pf3}{pf3s2} We claim that  there exist $\alpha_0=\alpha_0(d,p)\geq1$ such that for every $\alpha\geq\alpha_0$ there exists $\rho=\rho(\alpha,d,p)\in(0,1]$ such that $v$ defined in \eqref{def:v} is a weak subsolution in $\{x\in B_1\,:\,\delta<|x'|<\rho\}$ for all $\delta>0$. 

For this, we observe first that by \eqref{normnablav} the nonlinear strain $|\nabla v|^{p-2}\nabla v$ of $v$ is given by
\begin{equation}\label{normnablav1}
 |\nabla v|^{p-2}\nabla v=\sqrt{\alpha^2\phi^2+\phi'^2}^{p-2}\begin{pmatrix}\alpha \phi \\ \phi' \frac{x'}{|x'|}\end{pmatrix}e^{\alpha (p-1) x_1}.
\end{equation}

Introducing the notation $M_{2i}=B_1\cap \{\frac12 4^{-i}<|x'|<4^{-i}\}$ and $M_{2i+1}=B_1\cap \{\frac14 4^{-i}<|x'|<\frac124^{-i}\}$, we obtain with help of integrating by parts
\begin{align*}
\int_{B_1} \lambda_\theta|\nabla v|^{p-2}\nabla v\cdot \nabla\varphi=&\sum_{i\in\mathbb N}\int_{M_i}\omega_\theta|\nabla v|^{p-2}\nabla v\cdot \nabla\varphi\\
=&\sum_{i\in\mathbb N}-\int_{M_i}\omega_\theta\nabla \cdot (|\nabla v|^{p-2}\nabla v)\varphi+\int_{\partial M_i}\omega_\theta |\nabla v|^{p-2}\nabla v\cdot \nu \varphi\\
=&\sum_{i\in\mathbb N}-\int_{M_i}\omega_\theta\nabla \cdot (|\nabla v|^{p-2}\nabla v)\varphi+\int_{\partial M_i}\omega_\theta\sqrt{\alpha^2 \phi^2+\phi'^2}^{p-2}\phi' e^{(p-1)\alpha x_1}\varphi,
\end{align*}
where $\nu$ denotes the outer unit normal to $M_i$ that is $\nu=(0,x'/|x'|)$. Hence, it suffices to show that there exists $\alpha_0>0$ such that for all $\alpha\geq\alpha_0$ there exists $j=j(\alpha,d,p)\geq2$ such that
\begin{enumerate}[(i)]
\item $v$ satisfies $\nabla \cdot (|\nabla v|^{p-2}\nabla v)\geq0$ in the classical sense in each shell $M_i$ for all $i\geq j$;% $ $B_1\cap \{\frac12 4^{-i}<r<4^{-i}\}$ and $B_1\cap \{\frac14 4^{-i}<r<\frac12 4^{-i}\}$
\item the flux has only nonnegative jumps at the interfaces, that is
\begin{align*}
(\omega_\theta\sqrt{\alpha^2 \phi^2+\phi'^2}^{p-2}\phi')(\gamma_-):=&\lim_{r\to \gamma\atop r<\gamma}(\omega_\theta \sqrt{\alpha^2 \phi^2+\phi'^2}^{p-2}\phi')(r)\\
\leq&\lim_{r\to \gamma\atop r>\gamma}(\omega_\theta \sqrt{\alpha^2 \phi^2+\phi'^2}^{p-2}\phi')(r)=:(\omega_\theta |\nabla v|^{p-2}\phi')(\gamma_+)
\end{align*}
for all $\gamma\in\bigcup_{i\in\mathbb N, i\geq j}\{4^{-i}\}\cup\{\frac12 4^{-i}\}$.
\end{enumerate}

\substep{2.1} Argument for (i). Let $\alpha\geq1$ be such that
\begin{equation}\label{ass:alpha}
\alpha\geq \alpha_0(p,d):=\max\biggl\{1,C_Q,2^\frac{2-p}{p-1}C_Q,2^p\sqrt{C_Q(1+\frac{d-2}{p-1})},8\frac{d-1}{p-1}\biggr\}%\max\{1,2^\frac{2-p}{p-1}\}C_Q,\quad \alpha\geq 2^p\sqrt{C_Q(1+\frac{d-2}{p-1})},\quad  \alpha\geq8\frac{d-1}{p-1}
\end{equation}
%\begin{equation}\label{ass:alpha}
%\alpha\geq \max\{1,2^\frac{2-p}{p-1}\}C_Q,\quad \alpha\geq 2^p\sqrt{C_Q(1+\frac{d-2}{p-1})},\quad  \alpha\geq8\frac{d-1}{p-1}
%\end{equation}
%
and let $j=j(\alpha,d,p)\geq2$ be such that the estimates \eqref{eta:loweboundgeq} and \eqref{eta:loweboundleq} are valid.

We show that $v$ with $\alpha$ as above, satisfies $\nabla \cdot (|\nabla v|^{p-2}\nabla v)\geq0$ in the classical sense in each shell $M_i$ for all $i\geq j$. We compute with help of \eqref{normnablav1} on $M_i$
\begin{align}\label{eq:plaplacev}
&\nabla \cdot (|\nabla v|^{p-2}\nabla v)\notag\\
=&\biggl(\alpha^2(p-1)\sqrt{\alpha^2\phi^2+\phi'^2}^{p-2}\phi+(p-2)\sqrt{\alpha^2\phi^2+\phi'^2}^{p-4}|\phi'|^2(\alpha^2\phi+\phi'')\notag\\
&\quad+\sqrt{\alpha^2\phi^2+\phi'^2}^{p-2}(\phi''+(d-2)\frac{\phi'}{|x'|})\biggr)e^{\alpha(p-1) x_1}\notag\\
=&\sqrt{\alpha^2\phi^2+\phi'^2}^{p-4}\biggl(\alpha^2(p-1)(\alpha^2 \phi^2+\phi'^2)\phi+(p-2)\phi'^2(\alpha^2\phi+\phi'')\notag\\
&\qquad\qquad\qquad+(\alpha^2 \phi^2+\phi'^2)(\phi''+(d-2)\frac{\phi'}{|x'|})\biggr)e^{\alpha(p-1) x_1}.
\end{align}
We show that $v$ is a classical subsolution in  $M_{2i+1}$. Note that $\phi>0$ and $\phi',\phi''<0$ on $(\frac14 4^{-i},\frac12 4^{-i})$. 

We consider first the case $p\geq2$. From $\phi>0$, $\phi''<0$ and $\phi'^2\leq \alpha^2\phi^2+\phi'^2$, we deduce
$$
(p-2)\phi'^2(\alpha^2\phi+\phi'')\geq (p-2)(\alpha^2\phi^2+\phi'^2)\phi''
$$
and in combination with \eqref{eq:plaplacev} that
\begin{align*}
\nabla \cdot (|\nabla v|^{p-2}\nabla v)\geq&\sqrt{\alpha^2\phi^2+\phi'^2}^{p-2}(\alpha^2(p-1)\phi+(p-1)\phi''+(d-2)\frac{\phi'}{|x'|})e^{\alpha(p-1) x_1}.
\end{align*}  
Hence, $\nabla \cdot (|\nabla v|^{p-2}\nabla v)\geq0$ on $M_{2i+1}$ is equivalent to
\begin{align*}
\alpha^2(p-1)\phi(r)+(p-1)\phi''(r)+(d-2)\frac{\phi'(r)}{r}\geq0\quad \mbox{for all $r\in(\frac14 4^{-i},\frac12 4^{-i})$},
\end{align*}
which is by \eqref{def:phi}, \eqref{phiprime} and \eqref{phiprime2} valid if and only if%The right-hand side is non-negative on the interval $(\frac14 4^{-i},\frac12 4^{-i})$ if and only if
$$
\alpha^2(p-1)\left(i+1-(1-\eta_i)(4^{i+1}r-1)^2\right)-2(1-\eta_i)4^{i+1}((p-1)4^{i+1}+r^{-1}(d-2)(4^{i+1}r-1))\geq 0
$$
for all $r\in(\frac14 4^{-i},\frac12 4^{-i})$. We estimate with help of $\eta_i\in[0,1]$,
\begin{align*}
& \alpha^2(p-1)\left((i+1)-(1-\eta_i)(4^{i+1}r-1)^2\right)-2(1-\eta_i)4^{i+1}((p-1)4^{i+1}+r^{-1}(d-2)(4^{i+1}r-1))\\
\geq&\alpha^2(p-1)i-2(1-\eta_i)4^{2(i+1)}(p-1+d-2).
\end{align*}
The lower bound on $\eta_i\geq\underline \eta_i$, see \eqref{eta:loweboundgeq}, implies $1-\eta_i\leq 1-\underline \eta_i\leq 8^{-1} (4^{p-2} C_Q) 4^{-2i}(i+1)$ and thus
\begin{align}\label{subsolution:quadraticphase:1}
\alpha^2(p-1)i-2(1-\eta_i)4^{2(i+1)}(p-1+d-2)\geq&\alpha^2(p-1)i-4^{p-1} C_Q(i+1) (p+d-3)\geq0,
\end{align}
where the last inequality is valid since $(i+1)/i\leq2$ for $i\geq1$ and $\alpha^2\geq4^{p-1}C_Q2(1+\frac{d-2}{p-1})$ (which is ensured by $\alpha\geq\alpha_0$, see \eqref{ass:alpha}).

\medskip

Next, we consider the case $p\in(1,2)$. We deduce from \eqref{eq:plaplacev} with $p-2<0$ and $\phi>0$, $\phi',\phi''<0$ that
\begin{align}\label{subsolution:quadraticphasepleq2}
&\nabla \cdot (|\nabla v|^{p-2}\nabla v)\notag\\
\geq&\sqrt{\alpha^2\phi^2+\phi'^2}^{p-4}\left((\alpha^2 \phi^2+\phi'^2)\left(\alpha^2(p-1)\phi+\phi''+(d-2)\frac{\phi'}{|x'|}\right)-(2-p)\phi'^2\phi\right)e^{\alpha(p-1) x_1}.
\end{align} 
Similar computations as above yield for all $r\in(\frac14 4^{-i},\frac12 4^{-i})$ and $p\in(1,2)$
\begin{eqnarray*}
\alpha^2(p-1)\phi(r)+\phi''(r)+(d-2)\frac{\phi'(r)}{r}&\geq& \alpha^2(p-1)(i+\eta_i)-2(1-\eta_i)4^{2(i+1)}(d-1)\\
&\stackrel{\eqref{eta:loweboundleq}}{\geq}&\alpha^2(p-1)i-4\alpha(i+1)(d-1)\geq1
\end{eqnarray*}
where the last inequality is valid for all $i\geq1$ and $\alpha\geq 8\frac{d-1}{p-1}$ (see \eqref{ass:alpha}). Inserting this into \eqref{subsolution:quadraticphasepleq2}, we obtain (using $2-p\leq1$)
\begin{eqnarray*}%\label{subsolution:quadraticphasepleq2a}
\nabla \cdot (|\nabla v|^{p-2}\nabla v)&\geq&\sqrt{\alpha^2\phi^2+\phi'^2}^{p-4}\left(\alpha^2 \phi^2-\phi'^2\phi\right)e^{\alpha(p-1) x_1}\\
&\stackrel{\eqref{phiprime},\eqref{eta:loweboundleq}}\geq&\sqrt{\alpha^2\phi^2+\phi'^2}^{p-4}\phi\left(\alpha^2 \phi- (\alpha(i+1)4^{-i})^2\right)e^{\alpha(p-1) x_1}
\geq0,
\end{eqnarray*} 
where we use in the last inequality that $4^{-2i}(i+1)^2\leq1$ and $\phi\geq1$ on $(\frac14 4^{-i},\frac12 4^{-i})$ with $i\geq1$.
%the last inequality follows since 
%%
%$$
%\phi'^2\stackrel{\eqref{phiprime},\eqref{eta:loweboundleq}}\leq (\alpha(i+1)4^{-i})^2
%$$
%
%
% for $i\geq1$ since $\alpha\geq C_Q$ and $4^{-2i}(i+1)^2\leq1$.
 
\bigskip

Now, we show that $v$ is a classical subsolution in  $M_{2i}$. In view of \eqref{eq:plaplacev} it suffices to show that for all $r\in(\frac12 4^{-i},4^{-i})$ it holds
\begin{equation}\label{need:logphase}
\alpha^4(p-1)\phi^3(r)+\alpha^2(2p-3)\phi(r)\phi'^2(r)+\phi'^2((p-1)\phi''(r)+\frac{d-2}r\phi'(r))+\alpha^2\phi^2(r)(\phi''(r)+\frac{d-2}r\phi'(r))\geq0
\end{equation}
%\begin{equation}\label{need:logphase}
%\alpha^2(p-1)(\alpha^2 \phi^2+\phi'^2)\phi+\phi'^2(\alpha^2(p-2)\phi+(p-1)\phi''+\frac{d-2}r\phi')+\alpha^2\phi^2(\phi''+\frac{d-2}r\phi')\geq0
%\end{equation}
%
For $p\geq\frac32$, we obviously have
$$
\alpha^4(p-1)\phi^3(r)+\alpha^2(2p-3)\phi(r)\phi'^2(r)\geq0\qquad\mbox{for all $r\in(\frac12 4^{-i},4^{-i})$}.
$$
Let us first consider $p\geq2$. In the case $d\geq4$, the choice of $\phi$ ensures
$$ 
\forall r\in(\frac12 4^{-i},4^{-i}):\quad\phi''(r)+\frac{d-2}r\phi'(r)=0\quad\mbox{and}\quad (p-1)\phi''(r)+\frac{d-2}r\phi'(r)=(p-2)\phi''(r)\geq0 
$$
and similarly for $d=3$ that $\phi''(r)+\frac{d-2}r\phi'(r)=\frac12 \phi''(r)\geq0$ and $(p-1)\phi''(r)+\frac{d-2}r\phi'(r)\geq0$. Altogether, we have that \eqref{need:logphase} is valid for all $r\in(\frac12 4^{-i},4^{-i})$ provided $p\geq2$.

Next, we consider the case $p\in(1,2)$. The choice of $\phi$ ensures
$$ 
\forall r\in(\frac12 4^{-i},4^{-i}):\quad(p-1)\phi''(r)+\frac{d-2}r\phi'(r)=0\quad\mbox{and}\quad \phi''(r)+\frac{d-2}r\phi'(r)=(2-p)\phi''(r)\geq0. 
$$
Using the above two identities, we see that \eqref{need:logphase} is equivalent to
$$
\alpha^4(p-1)\phi^3(r)+\alpha^2(2p-3)\phi(r)\phi'^2(r)+\alpha^2\phi^2(r)(2-p)\phi''(r)\geq0
$$
and thus it suffices to show 
$$
\alpha^2(2p-3)\phi\phi'^2+\alpha^2\phi^2(2-p)\phi''\geq0.
$$
For $p\in[\frac{3}2,2]$ the above inequality directly follows from $\phi,\phi''\geq0$ and it is left to consider $p\in(1,\frac{3}2)$ in which case the above inequality is equivalent to
$$
\frac{3-2p}{2-p}\frac{\phi'^2}{\phi''}\leq \phi.
$$
The above inequality is valid on $(\frac12 4^{-i},4^{-i})$ provided $i\geq 2$. Indeed, this follows from $\phi\geq i$ on $(\frac12 4^{-i},4^{-i})$ and
$$
\frac{3-2p}{2-p}\frac{\phi'^2}{\phi''}\leq \frac{3-2p}{2-p}\frac{Q}{Q+1}\frac{\eta_i}{2^Q-1}2^Q\leq 2\frac{Q}{Q+1}\leq 2.
$$

\substep{2.2} Argument for (ii). Let $\alpha\geq1$ and $j=j(\alpha,d,p)\geq2$ be as in Substep~2.1. 

In view of \eqref{normnablav}, we need to show that for all $\gamma\in\bigcup_{i\in\mathbb N,i\geq j}\{4^{-i}\}\cup\{\frac12 4^{-i}\}$ it holds
\begin{equation}\label{ineq:jumpcondition}
(\omega_\theta \sqrt{\alpha^2 \phi^2+\phi'^2}^{p-2}\phi')(\gamma_+)\geq(\omega_\theta \sqrt{\alpha^2 \phi^2+\phi'^2}^{p-2}\phi')(\gamma_-).
\end{equation}
For $\gamma\in \bigcup_{i\in\mathbb N}\{4^{-i}\}$, we directly observe that
$$
(\omega_\theta \sqrt{\alpha^2 \phi^2+\phi'^2}^{p-2}\phi')(\gamma_+)=0>(\omega_\theta \sqrt{\alpha^2 \phi^2+\phi'^2}^{p-2}\phi')(\gamma_-).
$$
Moreover, the definition of $\eta_i$ via \eqref{def:etai} ensures that \eqref{ineq:jumpcondition} holds as an equality for all $\gamma\in \bigcup_{i\in\mathbb N, i\geq j}\{\frac12 4^{-i}\}$ which finishes the argument.
% 
%and thus it is left to consider $\gamma\in \bigcup_{i\in\mathbb N}\{\frac12 4^{-i}\}$. We compute
%%
%\begin{align}
%(\sqrt{\alpha^2 \phi^2+\phi'^2}^{p-2}\phi')((\tfrac12 4^{-i})_+)=&-\sqrt{(\alpha(i+\eta_i))^2+(C_Q\eta_i 4^{i})^2}^{p-2}C_Q\eta_i4^{i}\notag\\
%=&-\sqrt{(\alpha (i+\eta_i)4^{-i})^2+(C_Q\eta_i)^2}^{p-2}C_Q\eta_i 4^{i(p-1)}
%\end{align}
%%
%and
%%
%\begin{equation}
%( \sqrt{\alpha^2 \phi^2+\phi'^2}^{p-2}\phi')((\tfrac12 4^{-i})_-)=-\sqrt{(\alpha(i+\eta_i))^2+(2(1-\eta_i)4^{i+1})^2}^{p-2} 2(1-\eta_i)4^{i+1}.
%\end{equation}
%%
%Hence, \eqref{ineq:jumpcondition} for $\gamma=\frac12 4^{-i}$ is equivalent to
%%
%\begin{align*}
%\sqrt{(\alpha (i+\eta_i)4^{-i})^2+(C_Q\eta_i)^2}^{p-2}C_Q\eta_i \leq \sqrt{(\alpha(i+\eta_i))^2+(2(1-\eta_i)4^{i+1})^2}^{p-2} 8(1-\eta_i)4^{2i}(i+1)^{-(p-1)}
%\end{align*}
%%
%and the choice of $\eta_i$, see \eqref{}, ensures that the above inequality is true with an equality.

\PfStep{pf3}{pf3s3} Let $1<p<\infty$ and $\theta\in[0,1]$ be such that $(1-\theta)p<d-1$. Let $\alpha\geq\alpha_0$ and $\rho=\rho(\alpha,d,p)\in(0,1)$ be as in Step~2. We show that $v$ is a weak subsolution on $\Omega_\rho:=B_1\cap \{|x'|<\rho\}$.

We follow a similar reasoning as in \cite{FSS98}. For $k\in\mathbb N$, let $\psi_k\in C^1(\R;[0,1])$ be a cut-off function satisfying 
$$\psi_k=0\quad\mbox{on $[0,\frac12 4^{-k}]$},\quad \psi_k\equiv1\quad\mbox{on $[4^{-k},1]$},\quad \|\psi'_k\|_{L^\infty(0,1)}\leq 4^{k+1}$$
and we define $\varphi_k\in C^1(B_1)$ by $\varphi_k(x)=\psi_k(|x'|)$. For every $\eta\in C_c^1(\Omega_\rho)$ with $\eta\geq0$, we have
\begin{align}\label{est:step3:cutoff2}
\int_{\Omega_\rho}\lambda_\theta |\nabla v|^{p-2}\nabla v\cdot \nabla \phi\,dx=&\int_{\Omega_\rho}\lambda_\theta |\nabla v|^{p-2}\nabla v\cdot (\nabla ((1-\varphi_k)\eta)+\nabla (\varphi_k \eta))\,dx\notag\\
\leq&\int_{\Omega_\rho}\lambda_\theta |\nabla v|^{p-2}\nabla v\cdot \nabla ((1-\varphi_k)\eta)\,dx,
\end{align}
where we use that $0\leq \varphi_k \eta\in C_c^1(\Omega_\rho\setminus \Omega_{4^{-k-1}}) $ and that by Step~2 $v$ is a subsolution on $ \Omega_\rho\setminus \Omega_{\delta}$ for every $\delta\in(0,\rho)$. It remains to show that the integral on the right-hand side in \eqref{est:step3:cutoff2} vanishes as $k\to\infty$. Note that $0\leq 1-\varphi_k\leq1 $ and $1-\varphi_k\equiv0$ on $\Omega_\rho\setminus \Omega_{4^{-k}}$. Hence, with help of the product rule, we obtain
\begin{align*}
\biggl|\int_{\Omega_\rho}\lambda_\theta |\nabla v|^{p-2}\nabla v\cdot \nabla ((1-\varphi_k)\eta)\,dx\biggr|\leq\int_{\Omega_{4^{-k}}}\lambda_\theta |\nabla v|^{p-1}|\nabla \eta|\,dx+\int_{\Omega_{\rho}}\eta\lambda_\theta |\nabla v|^{p-2}|\nabla v\cdot \nabla \varphi_k|\,dx.
\end{align*}
By dominated convergence, the first term on the right-hand side converges to zero as $k$ tends to $\infty$ (recall that we showed in Step~1 that $\lambda_\theta |\nabla v|^p\in L^1(B_1)$). To estimate the remaining integral we use $|\nabla v\cdot \nabla \varphi_k|=|\phi'||\nabla \varphi_k|e^{\alpha x_1}\leq C4^{k+1}|\phi'|$ for some $C=C(\alpha)>0$ on the set $\{|x'|\in(\frac12 4^{-k},4^{-k})\}$ and $\nabla v\cdot \nabla \varphi_k=0$ otherwise. Hence, we have that $|\nabla v|^{p-2}|\nabla v\cdot \nabla \varphi_k|\leq C4^{k+1}|x'|^{-(p-1)}$ on $\{|x'|\in(\frac12 4^{-k},4^{-k})\}$ and thus we obtain (using $\lambda_\theta=(k+1)^{\theta(p-1)}(2|x'|)^{p\theta}$ on $\{|x'|\in(\frac12 4^{-k},4^{-k})\}$, see \eqref{def:omegatheta})
\begin{align*}
&\int_{\Omega_{\rho}}\eta\lambda_\theta |\nabla v|^{p-2}|\nabla v\cdot \nabla \varphi_k|\,dx\\
\leq& C\|\eta\|_{L^\infty(B_1)}4^{k+1}(k+1)^{\theta(p-1)}\int_{\frac12 4^{-k}}^{4^{-k}}r^{-(p-1)}r^{p\theta} r^{d-2}\,dr\\
=&C\|\eta\|_{L^\infty(B_1)}4^{k+1}(k+1)^{\theta(p-1)}\frac1{d-p(1-\theta)}4^{-k(d-p(1-\theta))}\biggl(1-2^{-(d-p(1-\theta)}\biggr)\stackrel{k\to\infty}\to0,
\end{align*}
where we use $p(1-\theta)<d-1$ the assumption and thus $d-p(1-\theta)>1$.

\PfStep{pf3}{pf3s4} Conclusion.

\substep{4.1} We consider the case $1+\frac1{d-2}<p<\infty$. Let $s>1$ and $t>\frac1{p-1}$ be such that $\frac1s+\frac1t=\frac{p}{d-1}$ and $\frac{t}{t+1}p<d-1$. We claim that there exist $0\leq \lambda\in L^s(B_1)$ with $\lambda^{-1}\in L^t(B_1)$ and an unbounded weak subsolution to \eqref{eq:negative}. We set $\theta=\frac1t \frac{d-1}p$ and observe that $\frac1s+\frac1t=\frac{p}{d-1}$ implies $\theta\in[0,1]$ and $1-\theta=\frac1s\frac{d-1}p$. Moreover, the restriction $\frac{t}{t+1}p<d-1$ in the form $p<(1+\frac1t)(d-1)$ ensures 
$$
(1-\theta)p=(1-\frac1t\frac{d-1}p)p=(p-\frac1t(d-1))<d-1.
$$ 
Hence, in view of Steps~1--3, there exist the function $v$ defined in \eqref{def:v} with $\alpha=\alpha_0=\alpha_0(p,d)\geq1$ such that $v$ is an unbounded weak subsolution to
$$
-\nabla \cdot (\lambda_\theta |\nabla v|^{p-2}\nabla v)=0\qquad\mbox{in $B(0,\rho)$ with $\rho=\rho(d,p)\in(0,1]$,}
$$
where $\lambda_\theta(x)=\omega_\theta(|x'|)$, \textit{cf.} \eqref{def:omegatheta}. Appealing to \eqref{estabove:omegatheta}, we have that there exists $C=C(d,p)>0$ such that
\begin{align*}
\|\lambda_\theta\|_{L^s(B_1)}\leq& C\biggl(\int_0^1(r^{-p}\log(2/r)^{-(p-1)})^\frac{d-1}pr^{d-2}\,dr\biggr)^{\frac{1}{s}}\\
=&C\biggl(\int_0^1r^{-1}\log(2/r)^{-(1-\frac1p)(d-1)}\,dr\biggr)^{\frac{1}{s}}<\infty
\end{align*}
where we use that $p>1+\frac1{d-2}$ implies $(1-\frac1p)(d-1)>1$. Similarly, we have
\begin{align*}
\|\lambda_\theta^{-1}\|_{L^t(B_1)}\leq&% C\biggl(\int_0^1(r^{-p}\log(2/r)^{-(p-1)})^\frac{d-1}pr^{d-2}\,dr\biggr)^{\frac{p}{d-1}\theta}\\
%=&
C\biggl(\int_0^1r^{-1}\log(2/r)^{-(1-\frac1p)(d-1)}\,dr\biggr)^{\frac{1}{t}}<\infty.
\end{align*}
Finally, we observe that by a simple scaling argument namely considering $\tilde v(x)=v(x/\rho)$ and $\lambda(x):=\lambda_\theta(x/\rho)$ we find that $\tilde v$ is a weak subsolution to \eqref{eq:negative} in $B_1$ and $\lambda$ satisfies $\lambda\in L^s(B_1)$ and $\lambda^{-1}\in L^t(B_1)$.

\substep{4.2} We consider $1<p\leq 1+\frac{1}{d-2}$. Let $s$ and $t$ be as in the statement of the theorem. Clearly, we find $\overline s>s$ and $\overline t>t$ such that $\frac1{\overline s}+\frac1{\overline t}=\frac{p}{d-1}$. Hence, for $\lambda_\theta$ with $\theta=\frac1{\overline t} \frac{d-1}p$, we obtain as in Substep~4.1, an unbounded subsolution. It remains to check if $\lambda_\theta \in L^s(B_1)$ and $\lambda^{-1}\in L^t(B_1)$. By construction, we have $1-\theta=\frac1{\overline s} \frac{d-1}p$ and thus
\begin{align*}
\|\lambda_\theta\|_{L^s(B_1)}\leq& C\biggl(\int_0^1(r^{-p}\log(2/r)^{-(p-1)})^{\frac{d-1}p\frac{s}{\overline s}}r^{d-2}\,dr\biggr)^{\frac{1}{s}}\\
=&C\biggl(\int_0^1r^{-(d-1)\frac{s}{\overline s}+d-2}\log(2/r)^{-(1-\frac1p)(d-1)\frac{s}{\overline s}}\,dr\biggr)^{\frac{1}{s}}<\infty,
\end{align*}
where we use $s/\overline s<1$ and thus $-(d-1)\frac{s}{\overline s}+d-2>-1$. A similar argument shows $\lambda_\theta^{-1}\in L^t(B_1)$ which finishes the argument. 
\end{proof}

\appendix

\section{Proof of Lemma~\ref{lm1}}

\begin{proof}[Proof of Lemma~\ref{lm1}]
As a starting point we use \cite[Lemma 2.1]{HS19}, which states for any $\delta \in (0,1]$
 \begin{equation}\nonumber
  J(\rho,\sigma,v) \le (\sigma - \rho)^{-(p-1+\frac 1\delta)} 
  \biggl( \int_{\rho}^\sigma \biggl ( \int_{S_r} \mu |v|^p \textrm{ d}\mathcal{H}^{d-1} \biggr)^{\delta} \dr \biggr)^{\frac 1\delta}.
 \end{equation}
 With this at hand, we proceed in analogy to the Step 2 of Proof of \cite[Lemma 2.1]{BS19a}: 
 
 %The planar case being slightly special, let us start with $d \ge 3$. 
 Observe that the assumption $s > 1$ implies $s_* \in [1,d-1)$. To estimate the right-hand side, on each sphere we will use ``scale-invariant'' Sobolev inequality with $\alpha := s_*$ in the form
 \begin{equation*}
  \biggl( \int_{S_r} |\phi|^{\alpha^*} \biggr)^{\frac 1 {\alpha^*}} \le c\biggl( \biggl( \int_{S_r} |\nabla \phi|^\alpha \biggr)^{\frac 1 \alpha} + \frac 1 r \biggl( \int_{S_r} |\phi|^\alpha \biggr)^\frac{1}{\alpha} \biggr),
 \end{equation*}
 which holds with $c=c(d,\alpha)$ with $1 \le \alpha < d-1$, $\frac{1}{\alpha^*} = \frac 1\alpha - \frac 1{d-1}$ and any $r > 0$. Moreover, observe that by Jensen inequality the previous estimate holds also if we change the exponent $\alpha^*$ on the l.h.s. to a smaller exponent $\alpha' \in [1,\alpha^*)$, while picking up a dimensional factor of $|S_r|^{\frac{1}{\alpha'}-\frac{1}{\alpha^*}}$. Since by assumption $r \in (\rho,\sigma) \subset [\frac 12, 2]$, we can hide this factor into the constant $c$ on the r.h.s. 
 
 % In the case $\frac 1p (1-\frac 1s) + \frac 1{d-1} > 1$ we have to set $s_*=1$ and we need modified (non-scale invariant) version of Sobolev inequality with additional factor of $r$ in front. Since $r \in (\rho,\sigma) \subset [\frac 12, 2]$, we will hide this factor into the constant $c$. %This case we will consider later, and for now we assume $\frac 1p (1-\frac 1s) + \frac 1{d-1} \le 1$. 
 
 The definition of $s_*$ implies that for $\alpha=s_*$ holds $\frac{ps}{s-1}\leq \alpha^*$. Hence, for any $\delta \in (0,1]$ we estimate
 \begin{align}\nonumber
 \biggl( \int_{\rho}^\sigma \biggl ( \int_{S_r} \mu |v|^p \biggr)^{\delta} \dr \biggr)^{\frac 1\delta} 
 &\le 
 \biggl( \int_{\rho}^\sigma \biggl ( \int_{S_r} \mu^s  \biggr)^{\frac \delta s}  
 \biggl ( \int_{S_r} |v|^{p\frac{s}{s-1}}  \biggr)^{\delta \frac{s-1}{s}} \dr \biggr)^{\frac 1\delta} \\
 \nonumber
 &\le c \biggl( \int_{\rho}^\sigma \biggl ( \int_{S_r} \mu^s  \biggr)^{\frac \delta s}  
 \biggl[
 \biggl ( \int_{S_r} |\nabla v|^{s_*}  \biggr)^{\frac{p\delta}{s_*}} +
 \frac 1{r^{p \delta}} \biggl ( \int_{S_r} |v|^{s_*}  \biggr)^{\frac{p\delta}{s_*}}  
 \biggr]
 \dr \biggr)^{\frac 1\delta},
 \end{align}
 with $s_*$ defined above. To be able to apply H\"older inequality in $r$ to get two bulk integrals, we require $\frac{\delta}{s} + \frac{p \delta}{s_*} = 1$. By choosing $\delta = (1+\frac p{d-1})^{-1} \in (0,1)$ in the case $s_* > 1$ and $\delta := (\frac 1s + p)^{-1}$ if $s_* = 1$, we obtain 
 \begin{align}\nonumber
 J((\rho,\sigma,v) \le 
 \frac{c}{(\sigma-\rho)^{\frac{pd}{d-1}}} 
 \biggl( \int_{B_\sigma \setminus B_\rho} \mu^s \biggr)^{\frac 1s}  
 \biggl[
 \biggl ( \int_{B_\sigma \setminus B_\rho} |\nabla v|^{s_*} \biggr)^{\frac{p}{s_*}} +
 \frac{1}{\rho^p} \biggl ( \int_{B_\sigma \setminus B_\rho} |v|^{s_*}  \biggr)^{\frac{p}{s_*}}  
 \biggr] 
 \end{align}
 Observe that in the latter case of $s_*=1$ and $\delta = (\frac 1s + p)$ the correct prefactor is actually $c (\sigma-\rho)^{-(2p-1+\frac 1s)}$. Nevertheless, the estimate farther holds thanks to $2p - 1 + \frac 1s \ge \frac{pd}{d-1}$, which in turn is equivalent to $1 \le \frac 1p (1-\frac 1s) + \frac 1{d-1}$ -- the condition which is exactly fulfilled in this case. 
\end{proof}

\section*{Acknowledgments}

PB was partially supported by the German Science Foundation DFG in context of the Emmy Noether Junior Research Group BE 5922/1-1. PB and MS thank Roberta Marziani for carefully reading parts of the manuscript.

\end{document}